\documentclass[hidelinks,nohypdvips,onefignum,onetabnum]{siamart251216}
\usepackage{amsfonts,amsmath,amssymb,graphicx,booktabs,array,placeins,needspace}
\usepackage{algpseudocode}
\usepackage[nopatch=eqnum]{microtype}
\newsiamremark{assumption}{Assumption}
\newsiamremark{remark}{Remark}
\crefname{assumption}{Assumption}{Assumptions}
\newcommand{\R}{\mathbb R}
\newcommand{\C}{\mathbb C}
\newcommand{\T}{\mathbb T}
\newcommand{\E}{\mathcal E}

\newcommand{\HH}{\mathcal H}
\newcommand{\PP}{\mathcal P}
\newcommand{\LL}{\mathcal L}

\newcommand{\norm}[1]{\left\|#1\right\|}
\newcommand{\ip}[2]{\left\langle#1,#2\right\rangle}
\newcommand{\re}{\operatorname{Re}}

\newcommand{\dd}{\,\mathrm d}
\newcommand{\Tr}{\mathcal T}
\newcommand{\eps}{\varepsilon}

\headers{Energy-constrained implicit SAT and robust error estimates}{Hao Hu and Haijun Yu}
\title{An energy-constrained implicit step-and-truncate algorithm and its robust error estimates}
\author{Hao Hu\thanks{School of Mathematical Sciences, University of Chinese Academy of Sciences, Beijing 100049, China, and LSEC, Institute of Computational Mathematics and Scientific/Engineering Computing, Academy of Mathematics and Systems Science, Beijing 100190, China (\email{huhao@lsec.cc.ac.cn}).}
\and Haijun Yu\thanks{Corresponding author. State Key Laboratory of Mathematical Sciences \& LSEC, Institute of Computational Mathematics and Scientific/Engineering Computing, Academy of Mathematics and Systems Science, Beijing 100190, China, and School of Mathematical Sciences, University of Chinese Academy of Sciences, Beijing 100049, China (\email{hyu@lsec.cc.ac.cn}).}}
\hypersetup{pdftitle={An energy-constrained implicit step-and-truncate algorithm and its robust error estimates},pdfauthor={Hao Hu and Haijun Yu}}
\begin{document}
\maketitle
\begin{abstract}
Deriving mesh-robust error bounds is an open problem for low-rank methods, including dynamical low-rank (DLR) and step-and-truncate (SAT) approaches, as highlighted in a recent review \cite[Section~8]{Review2025}.
Existing bounds can depend on Lipschitz constants that grow with inverse mesh size, making them impractical, particularly for kinetic equations, a major application of low-rank methods.
We develop a unified framework for implicit SAT that provides a discrete counterpart of energy-localization analysis for nonlinear partial differential equations.
By restricting numerical solutions to energy-localized regions, we obtain one-sided Lipschitz stability with constants independent of spatial resolution.
The resulting error estimates and time-step restrictions are mesh robust, answering this open problem for the class of implicit SAT schemes covered by our assumptions.
We emphasize that energy localization provides a general mechanism for mesh-independent robust error estimates in low-rank approximation, extending beyond the SAT methods considered here. Analogous results for DLR methods will be presented in subsequent work.
Returning to implicit SAT, we demonstrate the broad applicability of the framework through error estimates for different classes of equations. Specifically, for semilinear parabolic and Vlasov--Poisson equations, we establish robust first-order error bounds for backward-Euler SAT under suitable regularity assumptions.
Energy localization also has practical implications.
We design an energy-constrained implicit SAT algorithm.
Implicit SAT methods without energy constraints can select incorrect solution branches even when their algebraic residuals are small.
Moreover, time-step reduction can be essential for continuation into larger energy regions.
Numerical experiments support these conclusions.
\end{abstract}
\begin{keywords}
step-and-truncate, energy localization, low-rank approximation, one-sided Lipschitz stability, implicit time integration, reaction--diffusion equation, Vlasov--Poisson system
\end{keywords}
\begin{MSCcodes}65L20, 65M12, 65F55\end{MSCcodes}
\section{Introduction}\label{sec:intro}
Spatial discretization of time-dependent partial differential equations can produce large matrix differential equations with stiff vector fields. Low-rank approximation reduces their storage and computational cost by exploiting separation between groups of variables. A rank-$r$ factorization of an $m\times n$ matrix uses $O(r(m+n))$ entries instead of $mn$. Evolving this representation efficiently while preserving the stability of the underlying equation is a central problem in low-rank time integration; see \cite{Bachmayr2023,Review2025} for broader accounts.

Two main approaches have been developed. Dynamical low-rank (DLR) approximation projects the vector field onto the tangent space of a fixed-rank manifold \cite{KochLubich2007}. The projector-splitting integrator of Lubich and Oseledets \cite{LubichOseledets2014} avoids the small-singular-value restrictions of direct factor integration, with robustness established by Kieri, Lubich, and Walach \cite{KieriLubichWalach2016}. Further developments include low-rank splitting for stiff semilinear equations \cite{OstermannPiazzolaWalach2019}, basis-update and Galerkin integration \cite{CerutiLubich2022}, and adaptive ranks \cite{CerutiKuschLubich2022}. The step-and-truncate (SAT) approach instead advances the solution and then compresses the result. Rodgers, Dektor, and Venturi \cite{RodgersDektorVenturi2022} formulated and analyzed adaptive time integration of this type on tensor manifolds.

\Needspace{4\baselineskip}
Low-rank methods are widely used in the numerical solution of partial differential equations. Meyer, Manthe, and Cederbaum \cite{MeyerMantheCederbaum1990} introduced the multiconfiguration time-dependent Hartree approach for the time-dependent Schr\"odinger equation. For kinetic equations in three spatial and three velocity dimensions, the six-dimensional phase space makes full-grid discretizations expensive. For the Vlasov--Poisson equation, Kormann \cite{Kormann2015} developed a semi-Lagrangian solver in tensor-train format, while Einkemmer and Lubich \cite{EinkemmerLubich2018} constructed a low-rank projector-splitting integrator. Guo and Qiu \cite{GuoQiu2022} developed adaptive low-rank tensor approximations for linear transport and nonlinear Vlasov equations, and subsequently introduced conservative truncation for Vlasov dynamics \cite{GuoQiu2024}. For radiation transport, Peng, McClarren, and Frank \cite{PengMcClarrenFrank2020} combined dynamical low-rank splitting with finite-volume spatial discretization and a spherical-harmonics angular expansion.

For stiff spatial discretizations, implicit SAT is particularly attractive because implicit time stepping can alleviate the stability restrictions of explicit methods. Rodgers and Venturi \cite{RodgersVenturi2023} developed rank-adaptive implicit step-truncation methods. More recent implicit low-rank methods include Krylov-based integrators \cite{ElKahzaEtAl2024}, the adaptive matrix schemes of Appel\"o and Cheng \cite{AppeloCheng2025}, and reduced augmentation implicit low-rank methods \cite{NakaoQiuEinkemmer2025}. Stability and efficiency benefits are demonstrated, for example, by the Allen--Cahn comparisons in \cite[Figure~1 and Table~1]{RodgersVenturi2023}, which report lower computational cost for implicit SAT at comparable accuracy.

Despite substantial progress, low-rank methods remain a young field with many open questions. A prominent gap is the lack of a general theory of mesh-robust error bounds \cite[Section~8]{Review2025}. Existing estimates depend on Lipschitz constants that may grow under spatial refinement. In the analysis of implicit SAT, this dependence introduces an additional difficulty. Writing the full Lipschitz constant of the discretized vector field as $L_{\mathrm{Lip},h}$, the nonlinear stability argument in \cite[Section~4.1, equation~(36)]{RodgersVenturi2023} requires $\tau L_{\mathrm{Lip},h}\le1/2$. Its appearance therefore affects both the usefulness of the error bound and the theoretical justification of the advantage of implicit time stepping.

We address these difficulties through energy localization, a classical technique for establishing stability in nonlinear PDEs; see \cite{Kato1975} and \cite[Chapters~15--16]{Taylor2011}.
We develop a discrete counterpart of this approach for implicit SAT.
For a family of spatial discretizations, we choose suitable one-sided energies that are nonincreasing under SVD truncation and verify three conditions:
\begin{enumerate}
\item a mesh-uniform bound on the initial energy;
\item mesh-uniform controlled energy propagation within the energy-localized region;
\item mesh-uniform one-sided Lipschitz stability within that region.
\end{enumerate}
Under the associated mesh-independent step-size and residual conditions, these properties yield existence and uniqueness of the implicit stage in the energy region for each prescribed admissible residual, together with mesh-robust error estimates for the SAT approximations.

We verify the framework for semilinear reaction--diffusion equations $u_t=\kappa\Delta u+f(u)$.
For this class of equations, Ostermann, Piazzola, and Walach \cite{OstermannPiazzolaWalach2019} developed a low-rank Lie--Trotter method that separates the stiff linear and nonstiff nonlinear parts.
It uses the exact matrix-exponential representation of the stiff linear flow for efficient integration and approximates the nonstiff part by dynamical low-rank evolution.
Their error estimate contains a temporal contribution of order $\tau(1+|\log\tau|)$.
Our implicit SAT method requires no separate treatment of the stiff part, and our analysis gives a mesh-robust first-order error bound.

We also verify the framework for the original Vlasov--Poisson system on $\T\times\R$.
Applying our theory gives an $O(\tau+\delta_{K,N})$ error bound with constants independent of spatial resolution, where $\delta_{K,N}$ measures the spatial discretization error.
This provides a mesh-robust estimate for a kinetic equation of particular interest in low-rank approximation and removes the mesh-dependent Lipschitz factors that make existing bounds impractical in this setting.

Energy localization also has practical implications.
We design an energy-constrained implicit SAT algorithm.
Implicit SAT methods without energy constraints can select incorrect solution branches even when their algebraic residuals are small.
Moreover, time-step reduction can be essential for continuation into larger energy regions.
Numerical experiments support these conclusions.

Section~\ref{sec:sat} introduces SAT and the full Lipschitz difficulty. Section~\ref{sec:pde-localization} recalls the energy-localization argument, and Section~\ref{sec:framework} identifies energies preserved by SVD truncation. Section~\ref{sec:stability} establishes existence, uniqueness, and robust error bounds. Sections~\ref{sec:parabolic} and~\ref{sec:vp} verify the assumptions and first-order convergence for semilinear reaction--diffusion and Vlasov--Poisson equations, respectively. Section~\ref{sec:numerics} gives an energy-constrained algorithm with a one-sided stability step restriction and tests it numerically; Section~\ref{sec:conclusion} concludes.

\section{Step-and-truncate approximation and the Lipschitz difficulty}\label{sec:sat}
We begin with the evolution problem, its low-rank representation, and the SAT update. This makes it possible to identify precisely how a full Lipschitz constant, which can grow under mesh refinement, enters both the error estimates and the time-step restrictions.

\subsection{Evolution equations and low-rank representations}
Consider a time-dependent PDE written in the abstract form
\begin{equation}\label{eq:pde-general}
 \partial_t u=\mathcal F(t,u),\qquad u(0)=u_0.
\end{equation}
A tensor-product spatial discretization gives the matrix differential equation
\begin{equation}\label{eq:ode}
 X_h'(t)=F_h(t,X_h(t)),\qquad X_h(0)=X_{h,0}.
\end{equation}
Here $X_h(t)$ is an $m_h\times n_h$ matrix, and $h$ denotes all spatial discretization parameters. We equip the matrix space $H_h$ with the Frobenius inner product $\ip{\cdot}{\cdot}_h$ and its induced norm $\|\cdot\|_h$.

A rank-$r$ approximation has the form
\begin{equation}\label{eq:lowrank-representation}
 Y=USV^*,\qquad U^*U=V^*V=I_r,
\end{equation}
where $U\in\C^{m_h\times r}$ and $V\in\C^{n_h\times r}$ contain the two sets of basis vectors, and $S\in\C^{r\times r}$ contains their coefficients. In the underlying function representation, this corresponds to
$u_r(t,x,v)=\sum_{i,j=1}^r S_{ij}(t)a_i(t,x)\overline{b_j(t,v)}$.
The bases, coefficients, and rank may all change in time. Evolving this representation efficiently is the purpose of the low-rank method.

\subsection{Time stepping followed by truncation}
Let $0=t_0<\cdots<t_N=T$ and $\tau_n=t_{n+1}-t_n$. Given a low-rank matrix $Y^n$, SAT, which stands for step and truncate, first advances the solution and then compresses the result \cite{RodgersDektorVenturi2022}. In the step stage, we use implicit time integration. For simplicity, we focus on backward Euler and allow an inexact solution of its nonlinear equation:
\begin{equation}\label{eq:stage}
 \widehat Y^{n+1}-Y^n-\tau_nF_h(t_{n+1},\widehat Y^{n+1})
     =r_{n+1},
\end{equation}
where $r_{n+1}$ is the full algebraic residual.

Time stepping can increase the rank, so the second step truncates $\widehat Y^{n+1}$ to retain a low-rank representation. For $Z=\sum_{\ell=1}^{q}\sigma_\ell u_\ell v_\ell^*$ with decreasing singular values and a tolerance $\theta\ge0$, typically chosen in relation to the time step $\tau$, the truncation is defined by
\begin{equation}\label{eq:sat-truncation-definition}
 \Tr_\theta Z=\sum_{\ell=1}^{r}\sigma_\ell u_\ell v_\ell^*,
 \qquad
 \|Z-\Tr_\theta Z\|_h
   =\left(\sum_{\ell>r}\sigma_\ell^2\right)^{1/2}\le\theta.
\end{equation}
We choose $r$ as the smallest rank satisfying the tolerance. The retained singular vectors and values supply the factors of $Y^{n+1}$ used at the next time step:
\begin{equation}\label{eq:truncate}
 Y^{n+1}=\Tr_{\theta_n}\widehat Y^{n+1},\qquad
 \|Y^{n+1}-\widehat Y^{n+1}\|_h\le\theta_n.
\end{equation}

\subsection{Why the Lipschitz constant causes difficulties for error bounds and time-step restrictions}
Let $X^n\in H_h$ be a prescribed reference sequence for the SAT iterates. The total error satisfies
\[
 \|Y^n-X_h(t_n)\|_h
 \le \|Y^n-X^n\|_h+\|X^n-X_h(t_n)\|_h.
\]
We call the reference values $X^n$ comparison states. Taking $X^n=X_h(t_n)$ makes the second term vanish. A regular low-rank approximation is another possible choice \cite{KochLubich2007,KieriLubichWalach2016}; it separates the approximation error from the error in reproducing the reference trajectory. To estimate the first term, rewrite \eqref{eq:stage} as
\begin{equation}\label{eq:sat-comparison-identity}
 \begin{aligned}
 \widehat Y^{n+1}-X^{n+1}
 ={}&Y^n-X^n+r_{n+1}-d_{n+1}\\
 &+\tau_n\bigl(F_h(t_{n+1},\widehat Y^{n+1})
                -F_h(t_{n+1},X^{n+1})\bigr),
 \end{aligned}
\end{equation}
where
\begin{equation}\label{eq:defect}
 d_{n+1}=X^{n+1}-X^n-\tau_nF_h(t_{n+1},X^{n+1}).
\end{equation}
We call $d_{n+1}$ the discrete defects of the comparison states.
The difference of vector fields is therefore unavoidable. A direct norm estimate uses
\begin{equation}\label{eq:full-lipschitz}
 \|F_h(t,U)-F_h(t,V)\|_h\le L_{\mathrm{Lip},h}\|U-V\|_h.
\end{equation}
Set $e_n=\|Y^n-X^n\|_h$ and $\widehat e_{n+1}=\|\widehat Y^{n+1}-X^{n+1}\|_h$. If $\tau_nL_{\mathrm{Lip},h}<1$, the triangle inequality yields
\begin{equation}\label{eq:full-lipschitz-recursion}
 \widehat e_{n+1}\le
 \frac{e_n+\|r_{n+1}\|_h+\|d_{n+1}\|_h}{1-\tau_nL_{\mathrm{Lip},h}},
 \qquad e_{n+1}\le\widehat e_{n+1}+\theta_n.
\end{equation}
For $\tau_nL_{\mathrm{Lip},h}\le\zeta<1$, iteration gives
\begin{equation}\label{eq:full-lipschitz-error}
 \max_{t_n\le T}e_n\le
 \exp\!\left(\frac{L_{\mathrm{Lip},h}T}{1-\zeta}\right)
 \left[e_0+\sum_{t_{j+1}\le T}
       (\|d_{j+1}\|_h+\|r_{j+1}\|_h+\theta_j)\right].
\end{equation}
This exhibits the two difficulties discussed in the introduction: both the error constant and the admissible step size depend on $L_{\mathrm{Lip},h}$. Even for a linear heat equation, the Lipschitz constant of the standard discrete Laplacian grows like $h^{-2}$, although the heat flow is contractive in $L^2$. The large constant in \eqref{eq:full-lipschitz} need not describe the actual growth of perturbations.

A natural question is then: why can PDE stability analysis avoid this dependence on a full Lipschitz constant? The answer is energy localization in PDE analysis.

\section{Why PDE stability does not require a full Lipschitz bound}\label{sec:pde-localization}
Energy localization is a standard approach in PDE analysis; see, for example, \cite{Kato1975}. It relies on two mechanisms: controlled energy propagation and one-sided stability within the energy-localized region.

Let $H$ be a Hilbert space and $V\subset H$ a space encoding the required regularity and moments. Consider the evolution equation
\begin{equation}\label{eq:pde-evolution}
 u'=\mathcal F(t,u).
\end{equation}
For sufficiently regular solutions, the two mechanisms take the following form.
\begin{enumerate}
\item \textbf{Controlled energy propagation.}
Choose a continuous energy $\E:V\to[0,\infty)$ with $\E(u)\ge\|u\|_H^2$ such that $\mathcal K_Q=\{u\in V:\E(u)\le Q^2\}$ bounds the required regularity and moments. While $u(t)\in\mathcal K_Q$, assume
\begin{equation}\label{eq:pde-energy-growth}
 \frac{\dd}{\dd t}\E(u(t))\le c_Q(1+\E(u(t))),\qquad c_Q\ge0.
\end{equation}
If $\E(u(0))<Q^2$, Gronwall's inequality prevents exit from $\mathcal K_Q$ on a sufficiently short interval determined by $Q$, $c_Q$, and the initial energy.

\item \textbf{One-sided stability in the localized region.}
For $U,V\in\mathcal K_Q$, assume
\begin{equation}\label{eq:pde-one-sided}
 \re\ip{U-V}{\mathcal F(t,U)-\mathcal F(t,V)}_H
 \le\mu_Q\|U-V\|_H^2.
\end{equation}
Here $\mu_Q\ge0$ depends on the energy region.
\end{enumerate}
For spatially discretized equations, $c_Q$ and $\mu_Q$ are required to be independent of the mesh.

For two solutions $u,v$ remaining in $\mathcal K_Q$, set $e(t)=\|u(t)-v(t)\|_H$. Subtracting their equations and applying \eqref{eq:pde-one-sided} gives
\[
 \frac12\frac{\dd}{\dd t}e(t)^2\le\mu_Q e(t)^2.
\]
Gronwall's inequality gives $e(t)\le e^{\mu_Qt}e(0)$ without a full Lipschitz bound.

To transfer this argument to SAT, the energy localization must survive both the step and the truncate operations. We next identify one-sided energies whose bounds are preserved by SVD truncation.

\section{One-sided energies bring localization analysis into SAT}\label{sec:framework}
Section~\ref{sec:pde-localization} shows how regularity bounds and one-sided stability work together in PDE analysis. To use the same argument for the SAT update in Section~\ref{sec:sat}, the energy region must also be preserved by truncation. We identify a class of one-sided energies with this property.

In the matrix space $H_h$ introduced in Section~\ref{sec:sat}, consider an energy formed from operators acting on one matrix factor at a time:
\begin{equation}\label{eq:energy}
 \E_h(Z)=\|Z\|_h^2+\sum_j\|C_{j,h}Z\|_h^2
              +\sum_k\|Z D_{k,h}\|_h^2
          =\ip{Z}{A_h Z}_h,
 \qquad \|Z\|_{\E,h}=\E_h(Z)^{1/2}.
\end{equation}
Here $A_h$ is the positive self-adjoint operator on the matrix space induced by this quadratic form. For $Q>0$, define the open energy ball $\Omega_Q=\{Z:\E_h(Z)<Q^2\}$ and its closure $\overline{\Omega_Q}=\{Z:\E_h(Z)\le Q^2\}$.

The gradient of $\E_h/2$ is $A_hZ=Z+\sum_j C_{j,h}^*C_{j,h}Z+\sum_k ZD_{k,h}D_{k,h}^*$. Each term is a left or right multiple of $Z$ and hence belongs to its fixed-rank tangent space. This gives tangent-space compatibility; the following lemma establishes compatibility with SAT truncation.

\begin{lemma}[SVD contraction]\label{lem:svd}
Let $\Tr_r Z$ be any partial SVD retaining a subset of the singular terms of $Z$. Then
\begin{equation}\label{eq:svdcontract}
 \|\Tr_r Z\|_h\le\|Z\|_h,\quad
 \|C_{j,h}\Tr_r Z\|_h\le\|C_{j,h}Z\|_h,\quad
 \|(\Tr_r Z)D_{k,h}\|_h\le\|Z D_{k,h}\|_h.
\end{equation}
In particular, $\E_h(\Tr_r Z)\le\E_h(Z)$, with no singular-value gap assumption.
\end{lemma}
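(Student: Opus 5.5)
The plan is to write a partial SVD as a one-sided orthogonal projection applied to $Z$. Take the full SVD $Z=\sum_{\ell=1}^{q}\sigma_\ell u_\ell v_\ell^*$, and let $I$ be the retained index set. Set $P_V=\sum_{\ell\in I}v_\ell v_\ell^*$, the orthogonal projector onto the retained right singular vectors. Since $Zv_\ell=\sigma_\ell u_\ell$ and the $v_\ell$ are orthonormal, we get $ZP_V=\sum_{\ell\in I}\sigma_\ell u_\ell v_\ell^*=\Tr_r Z$. In the same way, with $P_U=\sum_{\ell\in I}u_\ell u_\ell^*$ we have $u_\ell^*Z=\sigma_\ell v_\ell^*$, and therefore $P_UZ=\Tr_r Z$. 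Both identities hold whenever the singular vectors are chosen consistently, even if some singular values are repeated. This is why no gap assumption is needed: we never appeal to uniqueness of singular subspaces, only to the one fixed SVD that defines $\Tr_r$.

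With these identities each inequality uses whichever representation keeps the operator on the side opposite the projector. For the left operators $C_{j,h}$, write $C_{j,h}\Tr_r Z=C_{j,h}ZP_V$. Right multiplication by an orthogonal projector does not increase the Frobenius norm, since $\|MP\|_h^2=\operatorname{tr}(MPP^*M^*)=\operatorname{tr}(MPM^*)\le\operatorname{tr}(MM^*)$ because $0\le P\le I$. Hence $\|C_{j,h}\Tr_r Z\|_h\le\|C_{j,h}Z\|_h$. For the right operators $D_{k,h}$, write $(\Tr_r Z)D_{k,h}=P_UZD_{k,h}$ and use the symmetric bound $\|PM\|_h\le\|M\|_h$. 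The plain norm bound $\|\Tr_r Z\|_h\le\|Z\|_h$ is the special case with the identity operator, or it follows directly from $\sum_{\ell\in I}\sigma_\ell^2\le\sum_\ell\sigma_\ell^2$.

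Summing the squared inequalities in the definition \eqref{eq:energy} gives $\E_h(\Tr_r Z)\le\E_h(Z)$. The only step that needs care is the first one: the same truncated matrix must be expressed both as $ZP_V$ and as $P_UZ$, so that left operators commute past a right projector and right operators commute past a left projector. Once the projectors are built from the given SVD factors, this is a direct computation. One should also note that the $C_{j,h}$ and $D_{k,h}$ are arbitrary linear maps acting by left and right multiplication, and no structure beyond linearity is used.
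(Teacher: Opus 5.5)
Your proof is correct and is essentially the paper's argument in projector form. The paper expands $\|CZ\|_h^2=\sum_\ell\sigma_\ell^2\|Cu_\ell\|^2$ using orthonormality of the right singular vectors, with the analogous identity for $ZD$ using the left ones, and drops the nonnegative summands of the discarded terms; your identities $\Tr_r Z=ZP_V=P_UZ$ together with the contraction property of orthogonal projectors give exactly this computation, since $\|CZP_V\|_h^2=\sum_{\ell\in I}\sigma_\ell^2\|Cu_\ell\|^2$.
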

\begin{proof}
Write $Z=\sum_\ell\sigma_\ell u_\ell v_\ell^*$. Orthogonality of the $v_\ell$ gives
$\|C Z\|_h^2=\sum_\ell\sigma_\ell^2\|C u_\ell\|^2$.
Orthogonality of the $u_\ell$ gives the analogous identity for $Z D$. Removing nonnegative summands proves each assertion.
\end{proof}

Lemma~\ref{lem:svd} identifies a class of energy-localized regions preserved by ordinary SVD truncation. It allows spatial regularity, velocity regularity, and moments to be controlled separately within the same energy. This is the structural property used below to keep the numerical trajectory in the region where the PDE stability estimate holds. A mixed term $\|CZD\|_h$ does not have the same contraction property in general.

\section{Existence, uniqueness, and robust error analysis}\label{sec:stability}
We now apply the localization argument of Section~\ref{sec:pde-localization} to the SAT scheme \eqref{eq:stage}--\eqref{eq:truncate}, using the one-sided energies from Section~\ref{sec:framework}.
For a family of spatial discretizations, the two PDE principles are complemented by a mesh-uniform initial energy bound. We call these the three principles for robust SAT error analysis.

\subsection{Three principles for robust SAT error analysis}\label{sec:three-principles}
\begin{enumerate}
\item \textbf{Mesh-uniform initial energy bound.}
For a fixed radius $Q>0$, the initial SAT states satisfy
\[
 \sup_h\E_h(Y^0)\le q_0<Q^2,
\]
where $q_0$ and $Q$ are independent of $h$. Thus the initial energy margin $Q^2-q_0$ is uniformly positive.

\item \textbf{Mesh-uniform controlled propagation of the one-sided energy.}
For every $Z\in\overline{\Omega_Q}$,
\[
 D\E_h(Z)[F_h(t,Z)]
 =2\re\ip{A_hZ}{F_h(t,Z)}_h
 \le c_Q(1+\E_h(Z)),
\]
with $c_Q\ge0$ independent of $h$. This is the pointwise form of \eqref{eq:pde-energy-growth} for the discrete energy.

\item \textbf{Mesh-uniform one-sided Lipschitz stability.}
For all $U,V\in\overline{\Omega_Q}$,
\begin{equation}\label{eq:onesided}
 \re\ip{U-V}{F_h(t,U)-F_h(t,V)}_h
 \le\mu_Q\|U-V\|_h^2,
\end{equation}
where $\mu_Q\ge0$ is independent of $h$.
\end{enumerate}
Under the step-size and residual conditions below, controlled energy growth establishes existence of the next implicit solution in the energy ball, and one-sided Lipschitz stability makes it unique there for each prescribed residual. SVD contraction preserves the localization after truncation. The same stability estimate then controls propagation of one-step errors.

\subsection{Existence and uniqueness under the three principles}\label{sec:implicit-stages}
For implicit SAT, Rodgers and Venturi \cite[Section~3]{RodgersVenturi2023} invoke the implicit function theorem to establish existence of the implicit solution. Here we establish existence in the energy ball by a topological argument: an energy estimate excludes boundary zeros of a homotopy, so the nonzero Brouwer degree ensures an interior root. This argument requires continuity and controlled energy growth, without a Jacobian invertibility assumption.

Theorem~\ref{thm:physical} ensures that, under its stated conditions, if $Y^n\in\Omega_Q$ then a next SAT state $Y^{n+1}\in\Omega_Q$ exists.

We bound the full residual in \eqref{eq:stage} by
\begin{equation}\label{eq:residualbounds}
 \|r_{n+1}\|_h\le\eps_n,\qquad
 \|r_{n+1}\|_{\E,h}\le\eps_{E,n}.
\end{equation}
The energy-residual bound enters the a priori existence and energy-propagation argument. The discrete $L^2$ residual bound enters the error estimate once the stages are localized.

\begin{theorem}[Existence, uniqueness, and energy propagation]\label{thm:physical}
Suppose $F_h$ is continuous, the energy-growth and one-sided Lipschitz conditions above hold on $\overline{\Omega_Q}$, and $\E_h(Y^n)\le q_n<Q^2$. Choose $\eta>0$ and set
\begin{equation}\label{eq:budget}
 q_{n+1}=\frac{q_n+c_Q\tau_n+\eps_{E,n}^2/(\eta\tau_n)}
                       {1-(c_Q+\eta)\tau_n}.
\end{equation}
If the denominator is positive, $q_{n+1}<Q^2$, and $\tau_n\mu_Q<1$, then for every fixed $r$ with $\|r\|_{\E,h}\le\eps_{E,n}$ the equation
$Z-Y^n-\tau_nF_h(t_{n+1},Z)=r$
has a unique solution in $\overline{\Omega_Q}$. This solution lies in $\Omega_Q$, satisfies $\E_h(Z)\le q_{n+1}$, and its SVD truncation satisfies the same bound.
\end{theorem}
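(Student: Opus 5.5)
The plan has three parts: an a priori energy estimate for solutions in $\overline{\Omega_Q}$, existence via a Brouwer degree homotopy whose boundary zeros that estimate excludes, and uniqueness from the one-sided Lipschitz condition \eqref{eq:onesided}.

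\textbf{A priori energy bound.} Suppose $Z\in\overline{\Omega_Q}$ solves $Z-Y^n-\tau_nF_h(t_{n+1},Z)=r$. Taking the real part of the $A_h$-inner product with $Z$ and using $\re\ip{A_hZ}{Z-Y^n}_h\ge\frac12(\E_h(Z)-\E_h(Y^n))$ gives
\[
 \tfrac12\bigl(\E_h(Z)-\E_h(Y^n)\bigr)\le\tau_n\re\ip{A_hZ}{F_h(t_{n+1},Z)}_h+\re\ip{A_hZ}{r}_h .
\]
This inequality is the polarization identity for the positive form $\ip{\cdot}{A_h\cdot}_h$. The second principle bounds the first term by $\frac{c_Q\tau_n}{2}(1+\E_h(Z))$. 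By Cauchy--Schwarz in the energy inner product and Young's inequality, the residual term is at most $\|Z\|_{\E,h}\eps_{E,n}\le\frac{\eta\tau_n}{2}\E_h(Z)+\frac{\eps_{E,n}^2}{2\eta\tau_n}$. Multiplying by $2$ and rearranging gives
\[
 (1-(c_Q+\eta)\tau_n)\E_h(Z)\le q_n+c_Q\tau_n+\eps_{E,n}^2/(\eta\tau_n),
\]
which is exactly $\E_h(Z)\le q_{n+1}<Q^2$. Hence any solution in the closed ball actually lies in $\Omega_Q$.

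\textbf{Existence by homotopy.} For $\lambda\in[0,1]$, consider
\[
 \Phi_\lambda(Z)=Z-Y^n-\lambda\bigl(\tau_nF_h(t_{n+1},Z)+r\bigr)
\]
on $\overline{\Omega_Q}$, viewed through the real structure of $H_h$. The set $\overline{\Omega_Q}$ is a bounded convex neighbourhood of $Y^n$ because $A_h\ge I$ is positive definite. The a priori estimate above applies verbatim with $\tau_n$ replaced by $\lambda\tau_n$ and $r$ by $\lambda r$. Each step only uses the two principles, which remain valid with the smaller coefficients, and the resulting bound $q_{n+1}(\lambda)$ is no larger than $q_{n+1}$.

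I would check this monotonicity carefully, since the Young term $\lambda^2\eps_{E,n}^2/(\eta\lambda\tau_n)=\lambda\eps_{E,n}^2/(\eta\tau_n)$ does scale correctly. It follows that $\Phi_\lambda$ has no zeros on $\partial\Omega_Q$ for any $\lambda$. Since $\deg(\Phi_0,\Omega_Q,0)=1$, as $\Phi_0$ is a translation with root $Y^n\in\Omega_Q$, homotopy invariance of the Brouwer degree gives a zero of $\Phi_1$ in $\Omega_Q$. Continuity of $F_h$ and finite dimensionality of $H_h$ make the degree well defined.

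\textbf{Uniqueness.} Let $Z_1,Z_2\in\overline{\Omega_Q}$ be two solutions. Subtracting the equations and pairing with $Z_1-Z_2$ in $\ip{\cdot}{\cdot}_h$, the one-sided condition \eqref{eq:onesided} gives
\[
 \|Z_1-Z_2\|_h^2\le\tau_n\mu_Q\|Z_1-Z_2\|_h^2,
\]
so $Z_1=Z_2$ since $\tau_n\mu_Q<1$.

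\textbf{Truncation.} The truncated state obeys the same bound by Lemma~\ref{lem:svd}, which gives $\E_h(\Tr_\theta Z)\le\E_h(Z)\le q_{n+1}$.

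The main obstacle is the degree argument: the boundary-exclusion estimate must hold uniformly along the whole homotopy. I would handle this with the explicit $\lambda$-scaling check above rather than invoking a general fixed-point theorem.
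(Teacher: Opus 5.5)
Your proposal is correct and follows the paper's proof essentially step for step. Both use the Brouwer degree on $\Omega_Q$ for the homotopy $Z-Y^n-\lambda\tau_nF_h(t_{n+1},Z)-\lambda r$, starting from the translation with root $Y^n$. Both obtain $\E_h(Z)\le q_{n+1}$ from the energy identity plus Young's inequality, prove uniqueness from \eqref{eq:onesided} with $\tau_n\mu_Q<1$, and handle truncation with Lemma~\ref{lem:svd}. The one cosmetic difference is in excluding boundary zeros. You rerun the a priori bound with $\lambda$-scaled data and use that $q_{n+1}(\lambda)$ is nondecreasing in $\lambda$; at $\lambda=0$ it is enough to note that the only root is $Y^n$. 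The paper instead bounds $2\re\ip{A_hZ}{G_\lambda(Z)}_h$ from below on $\partial\Omega_Q$ directly.
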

\begin{proof}
We first prove existence by Brouwer degree on the bounded open energy ball $\Omega_Q$, viewing $H_h$ as a real finite-dimensional space. For the definition, normalization, homotopy invariance, and existence property of this degree, see \cite[Chapter~1]{Deimling1985}. Fix an admissible residual $r$ and define
\[
 G_\lambda(Z)=Z-Y^n-\lambda\tau_nF_h(t_{n+1},Z)-\lambda r,
 \qquad 0\le\lambda\le1.
\]
At $\lambda=0$, $G_0(Z)=Z-Y^n$ has its unique zero $Y^n$ in $\Omega_Q$. Normalization and translation give $\deg(G_0,\Omega_Q,0)=1$.

To preserve this degree along the homotopy, we exclude zeros on $\partial\Omega_Q$. For any $Z$ on this boundary, $\E_h(Z)=Q^2$. The energy-growth condition and Young's inequality give
\[
\begin{aligned}
 2\re\ip{A_hZ}{G_\lambda(Z)}_h
 &\ge Q^2-q_n-\lambda\tau_nc_Q(1+Q^2)
       -\eta\tau_nQ^2-\frac{\eps_{E,n}^2}{\eta\tau_n}\\
 &\ge [1-(c_Q+\eta)\tau_n](Q^2-q_{n+1})>0.
\end{aligned}
\]
Thus $G_\lambda(Z)\ne0$ on $\partial\Omega_Q$ for every $\lambda\in[0,1]$. Homotopy invariance yields $\deg(G_1,\Omega_Q,0)=1$. Since a nonzero degree implies a zero in the domain, the implicit equation has a solution in $\Omega_Q$.

We now show that $\E_h(Z)\le q_{n+1}$ for any such solution. Taking the energy inner product of its equation with $Z$ gives
\begin{equation}\label{eq:energyidentity}
 \E_h(Z)-\E_h(Y^n)+\E_h(Z-Y^n)
 =2\tau_n\re\ip{A_h Z}{F_h(t_{n+1},Z)}_h
   +2\re\ip{A_h Z}{r}_h.
\end{equation}
Using the energy-growth condition and
$2\|Z\|_{\E,h}\|r\|_{\E,h}\le\eta\tau_n\E_h(Z)+\eps_{E,n}^2/(\eta\tau_n)$
and discarding the nonnegative difference term gives $\E_h(Z)\le q_{n+1}$. Lemma~\ref{lem:svd} transfers this bound to its SVD truncation.

For uniqueness, let $Z_1,Z_2\in\overline{\Omega_Q}$ solve the stage equation for the same $Y^n$ and $r$. Subtracting their equations and pairing with $W=Z_1-Z_2$ gives the following inequality, which forces $W=0$ because $\tau_n\mu_Q<1$:
\[
 \|W\|_h^2
 =\tau_n\re\ip{W}{F_h(t_{n+1},Z_1)-F_h(t_{n+1},Z_2)}_h
 \le\tau_n\mu_Q\|W\|_h^2.
\]
\end{proof}

To obtain a common time interval for the family of discretizations, suppose $\E_h(Y^0)\le q_0<Q^2$ and $\eps_{E,n}\le C_E\tau_n$, with $C_E$ independent of $h$. Fix $\eta>0$ and $0<\zeta<1$, and set
\begin{equation}\label{eq:uniformsteps}
 a_Q=c_Q+\eta,\qquad b_Q=c_Q+C_E^2/\eta,\qquad
 \tau_0=\frac{\zeta}{\max\{a_Q,\mu_Q\}}.
\end{equation}
Choose $T>0$ such that
\begin{equation}\label{eq:uniformtime}
 B(T):=e^{a_QT/(1-\zeta)}
       \left(q_0+\frac{b_Q}{a_Q}\right)-\frac{b_Q}{a_Q}<Q^2.
\end{equation}
For $\tau_n\le\tau_0$, the recursion \eqref{eq:budget} gives
$q_{n+1}+b_Q/a_Q\le(q_n+b_Q/a_Q)/(1-a_Q\tau_n)$.
Iteration using $-\log(1-z)\le z/(1-\zeta)$ for $0\le z\le\zeta$ yields
$q_n\le B(t_n)\le B(T)<Q^2$ for $t_n\le T$.
Theorem~\ref{thm:physical} therefore applies successively: each prescribed admissible residual has a unique implicit root in the energy ball, and SVD truncation preserves the bound.
Both $T$ and $\tau_0$ are independent of the mesh. The step restriction depends on energy growth and one-sided stability, avoiding the full Lipschitz restriction in \cite[Section~4.1]{RodgersVenturi2023} and \cite[Section~3.3.2]{AppeloCheng2025}.

\subsection{Robust error analysis of implicit SAT under the three principles}\label{sec:robust-error}
Use the comparison states and defects from Section~\ref{sec:sat}, and assume $X^n\in\overline{\Omega_Q}$. Pairing \eqref{eq:sat-comparison-identity} with $\widehat Y^{n+1}-X^{n+1}$ and applying \eqref{eq:onesided} yields
\begin{equation}\label{eq:impliciterror}
 \widehat e_{n+1}\le e_n+\|d_{n+1}\|_h+\eps_n
                 +\tau_n\mu_Q\widehat e_{n+1},
 \qquad e_{n+1}\le\widehat e_{n+1}+\theta_n,
\end{equation}
Here $e_n$ and $\widehat e_{n+1}$ are the errors defined in Section~\ref{sec:sat}. This replaces the full Lipschitz estimate \eqref{eq:full-lipschitz-recursion} by the localized one-sided estimate.
Only $\|r_{n+1}\|_h\le\eps_n$ is needed here: an energy-residual bound is unnecessary when stage membership in $\overline{\Omega_Q}$ is established directly.

\begin{theorem}[Robust error bound]\label{thm:linearstability}
Suppose all stages and comparison states lie in $\overline{\Omega_Q}$, $\|r_{n+1}\|_h\le\eps_n$, \eqref{eq:onesided} holds, and $\mu_Q\tau_n\le\zeta<1$. Then
\begin{equation}\label{eq:linearerror}
 \max_{t_n\le T}e_n\le
 \exp\!\left(\frac{\mu_QT}{1-\zeta}\right)
 \left[e_0+\sum_{t_{j+1}\le T}
       (\|d_{j+1}\|_h+\eps_j+\theta_j)\right].
\end{equation}
\end{theorem}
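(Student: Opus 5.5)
The plan is to derive the one-step recursion \eqref{eq:impliciterror}, solve it for $\widehat e_{n+1}$, and unroll it with a discrete Gronwall argument, exactly as in the passage from \eqref{eq:full-lipschitz-recursion} to \eqref{eq:full-lipschitz-error}, but with $\mu_Q$ in place of $L_{\mathrm{Lip},h}$.

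\textbf{Step 1: one-step inequality.} Write $W=\widehat Y^{n+1}-X^{n+1}$ and take the real part of the $\ip{\cdot}{\cdot}_h$ pairing of \eqref{eq:sat-comparison-identity} with $W$. The vector-field term is bounded by \eqref{eq:onesided}, which applies because both $\widehat Y^{n+1}$ and $X^{n+1}$ lie in $\overline{\Omega_Q}$ by hypothesis. It contributes at most $\tau_n\mu_Q\|W\|_h^2$. The remaining terms are bounded by Cauchy--Schwarz, giving
\[
\|W\|_h^2\le \bigl(e_n+\eps_n+\|d_{n+1}\|_h\bigr)\|W\|_h+\tau_n\mu_Q\|W\|_h^2 .
\]
If $W=0$ the inequality is trivial; otherwise divide by $\|W\|_h$ to obtain \eqref{eq:impliciterror}. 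Since $\mu_Q\tau_n\le\zeta<1$, this rearranges to
\[
\widehat e_{n+1}\le \frac{e_n+\|d_{n+1}\|_h+\eps_n}{1-\tau_n\mu_Q}.
\]
The triangle inequality with \eqref{eq:truncate} then gives $e_{n+1}\le\widehat e_{n+1}+\theta_n$.

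\textbf{Step 2: unrolling.} Since $1/(1-\tau_n\mu_Q)\ge1$, the truncation error $\theta_n$ can be absorbed into the same factor:
\[
e_{n+1}\le (1-\tau_n\mu_Q)^{-1}\bigl(e_n+\|d_{n+1}\|_h+\eps_n+\theta_n\bigr).
\]
Induction gives
\[
e_n\le \prod_{j<n}(1-\tau_j\mu_Q)^{-1}\Bigl[e_0+\sum_{j<n}\bigl(\|d_{j+1}\|_h+\eps_j+\theta_j\bigr)\Bigr],
\]
where each factor multiplying a later source term is a subproduct of the full product and is therefore bounded by it.

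\textbf{Step 3: bounding the product.} Use $-\log(1-z)\le z/(1-\zeta)$ for $0\le z\le\zeta$, the same inequality already used after \eqref{eq:uniformtime}. With $z=\tau_j\mu_Q\le\zeta$ this gives
\[
\prod_{j<n}(1-\tau_j\mu_Q)^{-1}\le\exp\Bigl(\mu_Q\sum_{j<n}\tau_j/(1-\zeta)\Bigr)=\exp\bigl(\mu_Q t_n/(1-\zeta)\bigr)\le\exp\bigl(\mu_QT/(1-\zeta)\bigr)
\]
for $t_n\le T$. Taking the maximum over $n$ yields \eqref{eq:linearerror}.

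The only step needing care is Step 1. The one-sided bound may be used only because the stage $\widehat Y^{n+1}$ itself, and not just the truncated iterate $Y^{n+1}$, is assumed to lie in $\overline{\Omega_Q}$. Also, the division by $\|W\|_h$ and the use of $\zeta<1$ to rearrange must be handled cleanly. Everything else is routine.
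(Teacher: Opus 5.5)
Your proposal is correct and follows the paper's route. You derive \eqref{eq:impliciterror} by pairing \eqref{eq:sat-comparison-identity} with $\widehat Y^{n+1}-X^{n+1}$ and applying \eqref{eq:onesided}, move the $\tau_n\mu_Q\widehat e_{n+1}$ term to the left, iterate, and bound the product using $-\log(1-z)\le z/(1-\zeta)$; your explicit absorption of $\theta_n$ via $(1-\tau_j\mu_Q)^{-1}\ge1$ and the subproduct bound just fill in details the paper's one-line proof leaves implicit.
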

\begin{proof}
Move $\mu_Q\tau_n\widehat e_{n+1}$ to the left in \eqref{eq:impliciterror}. Iterate the resulting inequality and use
$-\log(1-z)\le z/(1-\zeta)$ for $0\le z\le\zeta$.
\end{proof}

\subsection{Necessity of energy constraints: residual control alone may select an incorrect solution}\label{sec:root-selection}
We define an admissible implicit stage to be a root of the stage equation in the prescribed energy-norm ball $\overline{\Omega_Q}$; roots outside this ball are excluded from the SAT update. Theorem~\ref{thm:physical} guarantees existence and uniqueness for each prescribed residual under its energy and step-size conditions. Theorem~\ref{thm:linearstability} gives robust error bounds for the resulting SAT iterates.

A scalar example shows why the residual alone cannot select the correct root. Consider $y'=y^3$ with $y_n=0$, whose exact solution remains zero. The backward-Euler equation has three roots:
\begin{equation}\label{eq:cubic-root-example}
 z-\tau z^3=0,\qquad z=0,\quad z=\pm\tau^{-1/2}.
\end{equation}
All three have zero residual, but the errors of the two nonzero roots grow as $\tau\to0$. For a fixed energy ball $|z|\le Q$, the condition $\tau<Q^{-2}$ excludes both exterior roots and retains only $z=0$.

Residual-based stopping criteria are used in the inexact Newton solver for implicit SAT \cite[Section~4.1]{RodgersVenturi2023}. Such criteria alone cannot distinguish the correct solution among several algebraic roots; convergence to an inconsistent solution branch is also documented for implicit ODE methods \cite{GreenPatrickSpiteri2019}. Our energy constraint supplies the additional selection criterion, with existence, uniqueness, and robust error guarantees under the conditions stated above. A solver must select an admissible stage in $\overline{\Omega_Q}$, for example by continuation from $Y^n$ and an energy acceptance test. Direct acceptance checks $\E_h(Z)\le Q^2$ and $\|r\|_h\le\eps_n$, with $\tau_n\mu_Q\le\zeta<1$ enforced by the step controller.
The two-dimensional reaction--diffusion experiment in Section~\ref{sec:numerical-root-selection} demonstrates the practical role of this constraint: residual-based Newton backtracking, initialized at the preceding SAT state, converges to exterior roots, whereas adding the energy constraint to the candidate updates recovers the correct decaying trajectory.

\section{Semilinear parabolic equations}\label{sec:parabolic}
We consider scalar reaction--diffusion equations
\begin{equation}\label{eq:generalreaction}
 u_t=\kappa\Delta u+f(u)\quad\hbox{in }(0,1)^2,\qquad u(0)=u_0,
\end{equation}
with fixed $\kappa>0$ and homogeneous Dirichlet boundary conditions $u|_{\partial(0,1)^2}=0$.
Assume $f\in C^2(\R)$; only bounds on $f'$ and $f''$ on bounded intervals will enter the estimates.
If $f(0)\ne0$, the substitution $u=v+\phi$, where $-\kappa\Delta\phi=f(0)$ and $\phi|_{\partial(0,1)^2}=0$, gives $v_t=\kappa\Delta v+\widetilde f(v+\phi)$ with $\widetilde f(s)=f(s)-f(0)$ and $\widetilde f(0)=0$; hereafter we assume $f(0)=0$.

We use centered second differences at the interior nodes $(ih,jh)$, $1\le i,j\le m$, with $h=(m+1)^{-1}$.
With $B_{x,h}=B_{y,h}=-\kappa\Delta_h^{\rm Dir}$ for the one-dimensional Dirichlet Laplacian, the semidiscrete equation is
\begin{equation}\label{eq:parabolic}
 X_h'=F_h(X_h),\qquad
 F_h(Z)=-L_hZ+f^\circ(Z),\qquad
 L_hZ=B_{x,h}Z+ZB_{y,h}.
\end{equation}
Here $f^\circ$ denotes entrywise application, $X_{h,0}$ samples the continuous representative of $u_0$, and $\|Z\|_h=h\|Z\|_F$.
Both difference matrices are positive definite and self-adjoint, and their left and right actions commute.

Let $\widetilde u_0$ be the odd extension of $u_0$ in both coordinates, periodically continued with period two.
Writing $a_k$ for the Fourier coefficients of this extension on its period cell, define
\begin{equation}\label{eq:initial-fourier}
 \mathcal A_s(u_0)^2
 =\sum_{k\in\mathbb Z^2}(1+|k|^2)^s|a_k|^2.
\end{equation}
To verify the three principles in Section~\ref{sec:three-principles}, we assume $\mathcal A_2(u_0)<\infty$.
For the first-order error estimate below, we assume in addition $\mathcal A_{2+\sigma}(u_0)<\infty$ for some fixed $0<\sigma<1/2$.

\subsection{Verification of the three principles}\label{sec:parabolic-principles}
Define the one-sided graph energy
\begin{equation}\label{eq:graphenergy}
 \E_h(Z)=\|Z\|_h^2+\|B_{x,h}Z\|_h^2+\|ZB_{y,h}\|_h^2,
 \qquad \|Z\|_{\E,h}=\E_h(Z)^{1/2}.
\end{equation}
We verify the three principles for this energy.

\begin{enumerate}
\item \textbf{Mesh-uniform initial energy bound.}
Extend $Z$ oddly across both boundaries, including zero boundary values, to a periodic array $z$.
Set $n=2(m+1)$ and $K_n=\{-n/2,\ldots,n/2-1\}^2$.
Normalize the discrete Fourier transform by
$\widehat z_k=n^{-2}\sum_{j\in\{0,\ldots,n-1\}^2}z_j e^{-2\pi\mathrm i k\cdot j/n}$.
Parseval's identity gives $\sum_k|\widehat z_k|^2=\|Z\|_h^2$.
For a scalar Fourier frequency $p\in\{-n/2,\ldots,n/2-1\}$, the one-dimensional difference symbol is
\[
 \lambda_h(p)=4\kappa h^{-2}\sin^2(\pi p/n).
\]
Its values at $p=1,\ldots,m$ are the eigenvalues of each one-dimensional Dirichlet matrix $B_{x,h}$ and $B_{y,h}$.
For a two-dimensional frequency $k=(k_1,k_2)\in K_n$, their left and right actions have symbols $\lambda_h(k_1)$ and $\lambda_h(k_2)$, respectively.
Since $nh=2$ and $|p|\le n/2$, we have
$4\kappa p^2\le\lambda_h(p)\le\pi^2\kappa p^2$, uniformly in $h$.
Consequently,
\begin{equation}\label{eq:parabolic-energy-fourier}
 \begin{aligned}
 \E_h(Z)&=\sum_{k\in K_n}
 \bigl(1+\lambda_h(k_1)^2+\lambda_h(k_2)^2\bigr)|\widehat z_k|^2\\
 &\asymp_\kappa \sum_{k\in K_n}(1+|k|^2)^2|\widehat z_k|^2.
 \end{aligned}
\end{equation}

Write $w(k)=1+|k|^2$.
Since $\mathcal A_2(u_0)<\infty$, the Fourier coefficients are absolutely summable and sampling gives the aliasing identity
\[
 \widehat x_{h,0}(k)=\sum_{\ell\in\mathbb Z^2}a_{k+n\ell}.
\]
For every $s>1$, we have
\[
 \sup_{n,k\in K_n}
 \sum_{\ell\in\mathbb Z^2}
 \left(\frac{w(k)}{w(k+n\ell)}\right)^s\le C_s,
\]
since the finitely many nearest aliases have bounded ratios, and the remaining terms are bounded by $C_s(1+|\ell|^2)^{-s}$.
Weighted Cauchy--Schwarz therefore gives
\[
 w(k)^s|\widehat x_{h,0}(k)|^2
 \le C_s\sum_{\ell\in\mathbb Z^2}
 w(k+n\ell)^s|a_{k+n\ell}|^2.
\]
Summing over the disjoint aliasing classes yields
\begin{equation}\label{eq:initial-sampling}
 \sum_{k\in K_n}w(k)^s|\widehat x_{h,0}(k)|^2
 \le C_s\mathcal A_s(u_0)^2,\qquad s\ge2.
\end{equation}
Taking $s=2$ and using \eqref{eq:parabolic-energy-fourier} gives
\begin{equation}\label{eq:cubicinitialenergy}\E_h(X_{h,0})\le C_\kappa\mathcal A_2(u_0)^2=:q_*.
\end{equation}
Taking $Y^0=X_{h,0}$, or any SVD truncation of it, gives $\E_h(Y^0)\le q_*$.
Choose $Q^2>q_*$ to obtain a uniform positive initial energy margin.

\item \textbf{Mesh-uniform controlled propagation of the one-sided energy.}
The inverse Fourier transform and Cauchy--Schwarz give
\[
 \begin{aligned}
 \|Z\|_{\ell^\infty}
 &\le\sum_{k\in K_n}|\widehat z_k|\\
 &\le\left(\sum_{k\in\mathbb Z^2}(1+|k|^2)^{-2}\right)^{1/2}
       \left(\sum_{k\in K_n}(1+|k|^2)^2|\widehat z_k|^2\right)^{1/2}\\
 &\le C_S\|Z\|_{\E,h},
 \end{aligned}
\]
where the infinite sum is finite and \eqref{eq:parabolic-energy-fourier} makes $C_S$ independent of $h$.
Fix $Z\in\overline{\Omega_Q}$ and set
\[
 \begin{gathered}
 M_Q=C_SQ,\qquad
 M_{j,Q}=\sup_{|s|\le M_Q}|f^{(j)}(s)|\quad(j=1,2),\\
 K_Q=M_{1,Q}+3M_QM_{2,Q}.
 \end{gathered}
\]

Let $a=\delta_x^+Z$, so $\delta_x^-a=\delta_{xx}Z$, and write
$\|a\|_{4,h}^4=h^2\sum|a_{ij}|^4$.
Discrete summation by parts and the difference of cubes give
\[
 \begin{aligned}
 \|a\|_{4,h}^4
 &=-h^2\sum_{i,j}Z_{ij}(\delta_{xx}Z)_{ij}
       (a_{ij}^2+a_{ij}a_{i-1,j}+a_{i-1,j}^2)\\
 &\le3\|Z\|_{\ell^\infty}\|\delta_{xx}Z\|_h\|a\|_{4,h}^2.
 \end{aligned}
\]
The norm of $a$ sums over all edges, including those adjacent to the zero boundary, and the sum on the right is over interior nodes.
Each of the three quadratic terms has $\ell_h^2$ norm at most $\|a\|_{4,h}^2$ by H\"older's inequality.
Dividing, unless $a=0$, gives
\begin{equation}\label{eq:discrete-gradient-four}
 \|\delta_x^+Z\|_{4,h}^2
 \le3\|Z\|_{\ell^\infty}\|\delta_{xx}Z\|_h.
\end{equation}
Taylor's formula, including the boundary nodes since $f(0)=0$, gives pointwise
\[
 \begin{aligned}
 |\delta_{xx}f^\circ(Z)|
 &\le M_{1,Q}|\delta_{xx}Z|\\
 &\quad+\tfrac12M_{2,Q}\bigl(|\delta_x^+Z|^2+|\delta_x^-Z|^2\bigr).
 \end{aligned}
\]
Combining this with \eqref{eq:discrete-gradient-four} yields
$\|B_{x,h}f^\circ(Z)\|_h\le K_Q\|B_{x,h}Z\|_h$.
The same argument in the other direction gives
$\|f^\circ(Z)B_{y,h}\|_h\le K_Q\|ZB_{y,h}\|_h$.
Finally, $f(0)=0$ implies $\|f^\circ(Z)\|_h\le M_{1,Q}\|Z\|_h$.
These estimates give
\begin{equation}\label{eq:cubicsobolev}
 \|f^\circ(Z)\|_{\E,h}\le K_Q\|Z\|_{\E,h}
 \quad(\E_h(Z)\le Q^2),
\end{equation}
with constants independent of $h$.

Write $\E_h(Z)=\ip{Z}{A_hZ}_h$, where
$A_hZ=Z+B_{x,h}^2Z+ZB_{y,h}^2$.
Since $A_h$ and $L_h$ commute and are nonnegative, the diffusion contribution is nonpositive.
Thus, throughout the closed energy ball,
\begin{equation}\label{eq:cubicenergy}
 \begin{aligned}
 D\E_h(Z)[F_h(Z)]
 &=-2\ip{A_hZ}{L_hZ}_h+2\ip{Z}{f^\circ(Z)}_{\E,h}\\
 &\le2K_Q\E_h(Z)
 \le c_Q(1+\E_h(Z)),\qquad c_Q=2K_Q.
 \end{aligned}
\end{equation}

\item \textbf{Mesh-uniform one-sided stability.}
For $U,V\in\overline{\Omega_Q}$ every entry lies in $[-M_Q,M_Q]$ by the maximum-norm estimate above.
Set
\[
 \mu_Q=\max\left\{0,\sup_{|s|\le M_Q}f'(s)\right\}.
\]
The scalar mean-value theorem and diffusion dissipation yield, with $W=U-V$,
\begin{equation}\label{eq:paraboliccancel}
 \begin{aligned}
 \ip{W}{F_h(U)-F_h(V)}_h
 &=-\ip{W}{L_hW}_h
   +h^2\sum_{i,j}W_{ij}\bigl(f(U_{ij})-f(V_{ij})\bigr)\\
 &\le\mu_Q\|W\|_h^2.
 \end{aligned}
\end{equation}
No bound on $\|L_h\|=O(h^{-2})$ enters $c_Q$ or $\mu_Q$.
\end{enumerate}

\subsection{Mesh-robust first-order error}
Let $X_h(t)$ be the exact solution of \eqref{eq:parabolic} with initial value $X_{h,0}$.
The additional initial regularity in \eqref{eq:initial-fourier} supplies a mesh-uniform time-consistency budget.

\begin{lemma}[Mesh-uniform time consistency]\label{lem:cubic-time-consistency}
Assume $\mathcal A_{2+\sigma}(u_0)<\infty$ for some $0<\sigma<1/2$.
Choose $Q^2>q_*$ and a common interval $[0,T]$ on which $X_h(t)\in\overline{\Omega_Q}$.
For any time grid with maximum step $\tau=\max_n\tau_n$, the comparison states $X^n=X_h(t_n)$ satisfy
\begin{equation}\label{eq:cubicdefect}
 \int_0^T\|X_h''(t)\|_h\dd t\le C_{Q,T},\qquad
 \sum_{t_{n+1}\le T}\|d_{n+1}\|_h\le C_{Q,T}\tau,
\end{equation}
with constants independent of $h$ and the time grid.
\end{lemma}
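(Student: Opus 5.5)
The plan is to reduce the defect sum to the integral of $X_h''$, and then bound that integral through a time-weighted energy estimate for $V=X_h'$.

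\textbf{Step 1: defects from $X_h''$.} Since $X_h'(t_{n+1})=F_h(X_h(t_{n+1}))$, the defect \eqref{eq:defect} with $X^n=X_h(t_n)$ equals
\[
 d_{n+1}=X_h(t_{n+1})-X_h(t_n)-\tau_nX_h'(t_{n+1})
 =-\int_{t_n}^{t_{n+1}}(s-t_n)X_h''(s)\dd s .
\]
Hence $\|d_{n+1}\|_h\le\tau_n\int_{t_n}^{t_{n+1}}\|X_h''\|_h\dd s$. Summing over $n$ gives the second bound in \eqref{eq:cubicdefect} from the first. The whole task is therefore a mesh-uniform bound for $\int_0^T\|X_h''\|_h\dd t$.

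\textbf{Step 2: equations for the derivatives.} Set $V=X_h'$ and $W=X_h''$. Differentiating \eqref{eq:parabolic} gives
\[
 V'=-L_hV+f'^\circ(X_h)\odot V,\qquad
 W'=-L_hW+f'^\circ(X_h)\odot W+f''^\circ(X_h)\odot V\odot V .
\]
On $[0,T]$ we have $X_h(t)\in\overline{\Omega_Q}$. The $\ell^\infty$ bound then gives $|X_h|\le M_Q$ entrywise, so $f'$ and $f''$ are bounded by $M_{1,Q}$ and $M_{2,Q}$.

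\textbf{Step 3: energy estimates.} Two estimates are needed.
\begin{itemize}
\item Pairing the $V$-equation with $V$ gives $\|V(t)\|_h\le e^{M_{1,Q}t}\|V(0)\|_h$. Here $\|V(0)\|_h=\|F_h(X_{h,0})\|_h\le\|L_hX_{h,0}\|_h+M_{1,Q}\|X_{h,0}\|_h\lesssim Q$ by \eqref{eq:cubicinitialenergy}.
\item Pairing with $L_hV$ gives
\[
 \tfrac12\tfrac{\dd}{\dd t}\|L_h^{1/2}V\|_h^2+\|L_hV\|_h^2
 \le M_{1,Q}\|V\|_h\|L_hV\|_h ,
\]
so $\int_0^T\|L_hV\|_h^2\dd t\lesssim\|L_h^{1/2}V(0)\|_h^2+C$.
\end{itemize}
The obstacle is that $\|L_h^{1/2}V(0)\|_h$ involves $L_h^{3/2}X_{h,0}$, which is three derivatives and not controlled by $\mathcal A_2$. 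This is exactly where $\mathcal A_{2+\sigma}$ with $\sigma<1/2$ is used: it only controls $\|L_h^{1/2+\sigma/2}V(0)\|$-type quantities at level $\sigma$ below. I would therefore use time weights.

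\textbf{Step 4: time weights.} Use the splitting $W=-L_hV+f'^\circ(X_h)\odot V$, so that $\|W\|_h\le\|L_hV\|_h+M_{1,Q}\|V\|_h$. It then suffices to show $\int_0^T\|L_hV\|_h\dd t\le C$. For the linear part, write $V=e^{-tL_h}V(0)+\int_0^te^{-(t-s)L_h}g(s)\dd s$ with $g=f'^\circ(X_h)\odot V$.
\begin{itemize}
\item \emph{Homogeneous term.} Use the spectral bound $\|L_he^{-tL_h}L_h^{-\sigma/2}\|\le Ct^{-1+\sigma/2}$, which is integrable. This requires $\|L_h^{\sigma/2}V(0)\|_h\lesssim\mathcal A_{2+\sigma}(u_0)$. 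That follows from the sampling estimate \eqref{eq:initial-sampling} with $s=2+\sigma$, together with the symbol equivalence $\lambda_h(p)\asymp p^2$ and the Lipschitz/entrywise bound on $f'$ for the nonlinear part of $V(0)$. For the latter, use that $f^\circ$ maps $H^\sigma$-type discrete norms boundedly for $\sigma<1$ via a difference-quotient characterization. I expect the condition $\sigma<1/2$ is what makes the Dirichlet odd extension compatible, with no boundary trace requirement.
\item \emph{Inhomogeneous term.} Use maximal $L^2$-regularity of $L_h$, which is uniform in $h$ since $L_h$ is self-adjoint positive: $\int_0^T\|L_h\int_0^te^{-(t-s)L_h}g\|_h^2\le\int_0^T\|g\|_h^2$. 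Since $\|g\|_h\le M_{1,Q}\|V\|_h$ is bounded, Cauchy--Schwarz in time gives the $L^1$ bound.
\end{itemize}

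Combining these gives $\int_0^T\|X_h''\|_h\dd t\le C_{Q,T}$, independent of $h$ and of the time grid. The main step I expect to fight with is the fractional estimate $\|L_h^{\sigma/2}F_h(X_{h,0})\|_h\lesssim\mathcal A_{2+\sigma}$ for the nonlinear term, uniformly in $h$.
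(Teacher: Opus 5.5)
Your proposal is correct and is essentially the paper's proof: the defect representation, the Duhamel splitting of $X_h'$ with the homogeneous part $e^{-tL_h}F_h(X_{h,0})$ controlled by $\|L_h^{1-\beta}e^{-tL_h}\|\le C_\beta t^{\beta-1}$ for $\beta=\sigma/2$, and the maximal $L^2$-regularity identity for the forced part are exactly the steps of Appendix~\ref{app:cubic-time-consistency}. The step you expected to be the main obstacle is easy there. Because $\|L_h^\beta Z\|_h^2\le 2\E_h(Z)$ for $0<\beta\le1$, the composition bound \eqref{eq:cubicsobolev} directly gives $\|L_h^{\sigma/2}f^\circ(X_{h,0})\|_h\le\sqrt2K_Q\|X_{h,0}\|_{\E,h}$, so you need no fractional or difference-quotient composition estimate, and the restriction $\sigma<1/2$ plays no role in this step.
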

\begin{proof}
The integral bound is proved in Appendix~\ref{app:cubic-time-consistency}.
For the defects defined in \eqref{eq:defect},
\[
 \begin{aligned}
 d_{n+1}
 &=\int_{t_n}^{t_{n+1}}\bigl(X_h'(s)-X_h'(t_{n+1})\bigr)\dd s\\
 &=-\int_{t_n}^{t_{n+1}}(s-t_n)X_h''(s)\dd s.
 \end{aligned}
\]
Summing gives the second estimate in \eqref{eq:cubicdefect}.
\end{proof}

\begin{theorem}[SAT error for semilinear reaction--diffusion equations]\label{thm:parabolic}
Consider \eqref{eq:generalreaction}--\eqref{eq:parabolic} with $f\in C^2(\R)$, $f(0)=0$, and $\mathcal A_{2+\sigma}(u_0)<\infty$ for some $0<\sigma<1/2$.
Take $Y^0=X_{h,0}$ or an SVD truncation of it, choose $Q^2>q_*$ as in \eqref{eq:cubicinitialenergy}, and assume $\eps_{E,n}\le C_E\tau_n$.
Use the mesh-independent interval and step-size conditions \eqref{eq:uniformsteps}--\eqref{eq:uniformtime}, with $c_Q,\mu_Q$ from \eqref{eq:cubicenergy}--\eqref{eq:paraboliccancel} and initial budget $q_0=q_*$.
Then the energy-admissible SAT iterates satisfy
\begin{equation}\label{eq:parabolicerror}
 \max_{t_n\le T}\|Y^n-X_h(t_n)\|_h
 \le C_{Q,T}\left[
 \|Y^0-X_{h,0}\|_h+\tau+\sum_{t_{n+1}\le T}(\eps_n+\theta_n)\right].
\end{equation}
The constant may depend on $\kappa,f,\sigma$ and the initial regularity bound, but is independent of the mesh and numerical singular values.
\end{theorem}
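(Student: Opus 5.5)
The proof assembles results already established, in the following order.

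\textbf{Step 1: localization of the iterates.} The three principles of Section~\ref{sec:three-principles} hold for the graph energy \eqref{eq:graphenergy} with mesh-independent constants:
\begin{itemize}
\item the initial bound \eqref{eq:cubicinitialenergy} gives $\E_h(Y^0)\le q_*<Q^2$, and Lemma~\ref{lem:svd} covers the case where $Y^0$ is an SVD truncation of $X_{h,0}$;
\item energy propagation is \eqref{eq:cubicenergy} with $c_Q=2K_Q$;
\item one-sided stability is \eqref{eq:paraboliccancel}.
\end{itemize}
With $\eps_{E,n}\le C_E\tau_n$ and $\tau_n\le\tau_0$ from \eqref{eq:uniformsteps}, the budget recursion following Theorem~\ref{thm:physical} gives $q_n\le B(T)<Q^2$ for $t_n\le T$. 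Applying Theorem~\ref{thm:physical} inductively shows that each implicit stage $\widehat Y^{n+1}$ exists uniquely in $\overline{\Omega_Q}$ and lies in $\Omega_Q$ with $\E_h(\widehat Y^{n+1})\le q_{n+1}$. Since SVD truncation does not increase the energy, $Y^{n+1}$ inherits the same bound. This keeps all stages in $\overline{\Omega_Q}$, which is the first hypothesis of Theorem~\ref{thm:linearstability}. ``Energy-admissible'' means exactly that we select this unique root in the ball.

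\textbf{Step 2: localization of the comparison states.} Take $X^n=X_h(t_n)$. By the choice of $[0,T]$ in Lemma~\ref{lem:cubic-time-consistency}, $X_h(t)\in\overline{\Omega_Q}$ on $[0,T]$. The interval $T$ must be compatible with both requirements, so I will take it as the minimum of the interval from \eqref{eq:uniformtime} and the interval of the lemma.

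The latter needs justification. The continuous-in-time energy satisfies $\frac{\dd}{\dd t}\E_h(X_h)\le c_Q(1+\E_h(X_h))$ while the solution stays in the ball, by \eqref{eq:cubicenergy}. Starting from $q_*<Q^2$, Gronwall's inequality gives a mesh-independent exit time, exactly as in Section~\ref{sec:pde-localization}. A continuity argument, with finite-dimensional ODE existence for each $h$, then gives global existence of $X_h$ on that interval.

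\textbf{Step 3: the error bound.} Since $\tau_n\mu_Q\le\zeta$ by the choice of $\tau_0$, Theorem~\ref{thm:linearstability} yields \eqref{eq:linearerror} with $e_0=\|Y^0-X_{h,0}\|_h$. Lemma~\ref{lem:cubic-time-consistency} then gives $\sum\|d_{n+1}\|_h\le C_{Q,T}\tau$. Absorbing $\exp(\mu_QT/(1-\zeta))$ into $C_{Q,T}$ gives \eqref{eq:parabolicerror}.

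The constants depend only on the following quantities, all independent of $h$ and of the singular values, since Lemma~\ref{lem:svd} needs no gap assumption:
\begin{itemize}
\item $Q$, $T$, and $\zeta$;
\item $c_Q$ and $\mu_Q$, which depend on $f$, $\kappa$, and $C_S$;
\item the constant of the defect lemma, which depends on $\sigma$ and $\mathcal A_{2+\sigma}(u_0)$.
\end{itemize}

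\textbf{Main obstacle.} The substantive difficulty is the mesh-uniform bound $\int_0^T\|X_h''\|_h\dd t\le C$ inside Lemma~\ref{lem:cubic-time-consistency}, which is deferred to the appendix. Given that lemma, the one remaining point that needs care is Step 2: the common interval must guarantee localization of the exact semidiscrete trajectory as well as of the SAT iterates. I would handle it by the Gronwall exit-time argument above, with the same $c_Q$ and $q_*$, shrinking $T$ if necessary.
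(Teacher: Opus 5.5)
Your proposal is correct and follows the paper's route: the paper's proof says the preceding energy estimates place the stages and the comparison states $X_h(t_n)$ in the same ball $\overline{\Omega_Q}$, and then applies Theorem~\ref{thm:linearstability} with \eqref{eq:paraboliccancel} and the defect budget \eqref{eq:cubicdefect}. Your Step~2 Gronwall argument usefully makes the localization of $X_h$ explicit, and shrinking $T$ is not actually needed: since $a_Q\ge c_Q$ and $b_Q/a_Q\ge c_Q/a_Q$, one checks $B(T)\ge(1+q_*)e^{c_QT}-1$, so \eqref{eq:uniformtime} already keeps $X_h(t)$ inside $\Omega_Q$ on the stated interval $[0,T]$.
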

\begin{proof}
The preceding energy estimates put the stages and comparison states in the same ball.
Apply Theorem~\ref{thm:linearstability} with \eqref{eq:paraboliccancel} and the defect budget \eqref{eq:cubicdefect}.
\end{proof}

\Needspace{4\baselineskip}
\begin{remark}[Energy residuals and direct energy acceptance]\label{rem:energy-residual-acceptance}
The energy-residual condition $\eps_{E,n}\le C_E\tau_n$ guarantees a priori localization of the implicit stages. For residuals with substantial high-frequency content, enforcing it can require a tighter discrete $L^2$ solver tolerance as the mesh is refined. Accordingly, Algorithm~\ref{alg:energy-sat}, developed below, checks stage membership in the energy ball directly instead of imposing this energy-residual condition. If these acceptance checks give $\E_h(\widehat Y^{n+1})\le Q^2$, the exact semidiscrete solution remains in the same ball, and $\tau_n\mu_Q\le\zeta<1$, then SVD contraction, Theorem~\ref{thm:linearstability}, and Lemma~\ref{lem:cubic-time-consistency} give the same error estimate using the discrete $L^2$ residual and SVD-tail bounds. This conclusion applies on the accepted trajectory and requires no energy-residual bound. We emphasize that either form of localization, the energy-residual condition of this theorem or the direct energy checks used below, yields a mesh-robust error estimate: in both cases, the constant in \eqref{eq:parabolicerror} is independent of the mesh.
\end{remark}

As a concrete instance, the problem used in \cite[Section~2]{OstermannPiazzolaWalach2019} is
\begin{equation}\label{eq:cubicmodel}
 \begin{aligned}
 &u_t=\kappa\Delta u+u^3\quad\hbox{in }(0,1)^2,\qquad
 u|_{\partial(0,1)^2}=0,\\
 &u_0(x,y)=16x(1-x)y(1-y),\qquad \kappa=1/50.
 \end{aligned}
\end{equation}
Its sine coefficients satisfy $|a_{k\ell}|\le Ck^{-3}\ell^{-3}$, so
$\mathcal A_{2+\sigma}(u_0)<\infty$ whenever $0<\sigma<1/2$:
by symmetry of the two coordinate contributions, the weighted sum is bounded by a constant times
$(\sum_{k\ge1}k^{-2+2\sigma})(\sum_{\ell\ge1}\ell^{-6})$.
For $p_h=(ih(1-ih))_{i=1}^m$, the elementary identity
$-\Delta_h^{\rm Dir}p_h=2\boldsymbol1$, where $\boldsymbol1$ is the all-ones vector, gives the sharper initial bound
$\E_h(X_{h,0})\le1+128\kappa^2$.

Thus first-order time convergence follows when the accumulated solver and truncation errors are $O(\tau)$.
For the cubic example, the principal estimate for low-rank Lie--Trotter splitting in \cite[Theorem~1]{OstermannPiazzolaWalach2019} is
\begin{equation}\label{eq:ostermanncomparison}
 \|X_h(t_n)-Y_{\rm LT}^n\|_h
 \le C_0\tau(1+|\log\tau|)+C_1\delta_0+C_2\eps_{\rm tan}.
\end{equation}
Here $\delta_0$ is the initial error and $\eps_{\rm tan}$ bounds the normal component of the nonlinear vector field.
Our full implicit update requires no stiff/non-stiff time splitting and gives the $O(\tau)$ time-error bound \eqref{eq:parabolicerror}, with measured residuals and SVD tails.

\section{The Vlasov--Poisson system}\label{sec:vp}
We next treat the original Vlasov--Poisson equation on the whole velocity line.

\subsection{Choice of the spatial discretization}\label{sec:vp-discretization}
Let $x\in\T=\R/(2\pi\mathbb Z)$ and $v\in\R$. We use the sign convention
\begin{equation}\label{eq:vp-continuous}
 \begin{aligned}
 &f_t+v f_x-E[f]f_v=0,\qquad
 E[f]_x=\chi(\rho_f-\overline{\rho_f}),\\
 &\int_{\T}E[f]\dd x=0,\qquad
 \rho_f=\int_{\R}f\dd v,
 \end{aligned}
\end{equation}
where the bar denotes the spatial mean. With this convention, $\chi=-1$ is repulsive plasma coupling and $\chi=1$ is attractive coupling. The spatial boundary condition is periodic, and velocity remains on the whole real line. Define
\begin{equation}\label{eq:vp-h-transform}
 w(v)=\sqrt{1+v^2},\quad g=wf,\quad
 b(v)=\frac{w'(v)}{w(v)}=\frac{v}{1+v^2},\quad T=\partial_v-b(v).
\end{equation}
Then \eqref{eq:vp-continuous} is exactly equivalent to
\begin{equation}\label{eq:vp-h-g}
 g_t=\LL_g g:=-v\partial_xg+E[g]Tg,\qquad
 \rho[g]=\int_{\R}\frac{g}{w}\dd v.
\end{equation}
Here and below $E[g]$ is computed from $\rho[g]$, not from $\int g\,dv$.
All norms without subscripts are in $L^2(\T\times\R)$.
The corresponding norm for $f$ is $\norm{wf}$, and
\begin{equation}\label{eq:vp-h-density}
 \norm{\rho[z]}_{L_x^2}\le\sqrt\pi\norm z,\qquad
 \norm{E[z]}_{L_x^\infty}\le c_E\norm z,\qquad c_E=\frac\pi{\sqrt6}.
\end{equation}
Indeed, $\int_{\R}w^{-2}\,dv=\pi$. With the orthonormal spatial modes
$e_k(x)=e^{ikx}/\sqrt{2\pi}$, Poisson inversion and
$\sum_{k\ne0}k^{-2}=\pi^2/3$ give the second bound. Also
$\norm b_\infty=1/2$, and every fixed-order derivative of $b$ is bounded.
For an integer $m\ge0$, the same argument, and the one-dimensional Sobolev
embedding for derivatives of $E$, give
\begin{equation}\label{eq:vp-h-fieldhigher}
 \norm{E[z]}_{W^{m,\infty}(\T)}\le C_m\norm z_{H_x^mL_v^2}.
\end{equation}
\subsubsection{Fourier--Hermite Galerkin projection}
Recall the orthonormal Fourier modes $e_k(x)=e^{ikx}/\sqrt{2\pi}$, $k\in\mathbb Z$.
Let $\phi_n$ be the normalized real Hermite functions and set
\[
 \begin{aligned}
 A&=1-\partial_x^2,& A e_k&=(1+k^2)e_k,\\
 \HH&=1-\partial_v^2+v^2,& \HH\phi_n&=(2n+2)\phi_n.
 \end{aligned}
\]
For $K\ge1$ and $N\ge1$, let $\PP=\PP_{K,N}$ denote orthogonal projection onto
\[
 S_{K,N}=\operatorname{span}\{e_k\phi_n:|k|\le K,\ 0\le n<N\}.
\]
Work in the real subspace of this space, equivalently with Fourier coefficients
satisfying $U_{-k,n}=\overline{U_{k,n}}$. The semidiscrete equation is
\begin{equation}\label{eq:vp-hermite-ode}
 u'=F_{K,N}(u)=\PP\LL_u u,\qquad u(0)=\PP g_0.
\end{equation}
If $u=\sum U_{kn}e_k\phi_n$, then $\norm u=\norm U_F$ and
\begin{equation}\label{eq:vp-h-matrix}
 U'=-D_xUV_N^\top+C_{E[U]}U T_N^\top,\qquad
 T_N=D_N-B_N.
\end{equation}
The matrices are
\[
 \begin{aligned}
 (D_x)_{kk}&=ik,& (V_N)_{mn}&=\ip{\phi_m}{v\phi_n},\\
 (D_N)_{mn}&=\ip{\phi_m}{\partial_v\phi_n},&
 (B_N)_{mn}&=\ip{\phi_m}{b\phi_n}.
 \end{aligned}
\]
The field uses the density vector $d_n=\int\phi_n/w\,dv$ and
$\rho_k=\sum_n U_{kn}d_n$; multiplication by $E$ is exact Fourier convolution.
Here $V_N,B_N$ are real symmetric and $D_N$ is real skew-symmetric.
This retains the usual left/right matrix actions needed for a factor solver.

\subsection{Verification of the three principles}\label{sec:vp-principles}
We verify the principles of Section~\ref{sec:three-principles} for the two graph energies defined below.

\subsubsection{One-sided graph energies}
For $s=1,2$, define
\begin{equation}\label{eq:vp-hermite-energy}
 \begin{aligned}
 \E_s(u)&=\norm u^2+\norm{A^s u}^2+\norm{\HH^s u}^2\\
 &=\norm U_F^2+\norm{A_K^sU}_F^2+\norm{UH_N^s}_F^2,
 \end{aligned}
\end{equation}
where $A_K=\operatorname{diag}(1+k^2)$ and
$H_N=\operatorname{diag}(2n+2)$. Thus $s=1$ controls second-order derivatives
and moments; $s=2$ controls fourth-order ones. The oscillator controls both
velocity derivatives and velocity moments.
Crucially,
\begin{equation}\label{eq:vp-h-commute}
 \PP A^s=A^s\PP,\qquad \PP\HH^s=\HH^s\PP.
\end{equation}
Hence $\E_s(\PP z)\le\E_s(z)$. Ordinary partial SVD of $U$ also gives
\begin{equation}\label{eq:vp-h-svd}
 \E_s(\Tr_r U)\le\E_s(U).
\end{equation}
This follows from Lemma~\ref{lem:svd} and requires no singular-value gap.
SVD here is in the coefficient norm for $g$, equivalently the weighted norm
for $f$. Dividing the retained velocity factors by $w$ preserves the separated
rank.

We record the mixed-regularity estimate used in the commutator bounds.
\begin{lemma}\label{lem:vp-h-mixed}
Let $s=1$ or $2$. For nonnegative integers $a,p,q$ with $a+p+q\le2s$,
\begin{equation}\label{eq:vp-h-mixed}
 \norm{\partial_x^a v^p\partial_v^q u}\le C_s\E_s(u)^{1/2}.
\end{equation}
The same bound holds for any ordering of the $p+q$ velocity multiplication
and differentiation operators. A bounded coefficient multiplying the resulting
expression contributes its supremum norm. These constants are independent of $K,N$.
\end{lemma}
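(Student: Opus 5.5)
The plan is to reduce \eqref{eq:vp-h-mixed} to two one-factor bounds, one in $x$ and one in $v$, and then combine them by a Fourier/Hermite interpolation. The function $u$ lies in $S_{K,N}$ and is expanded as $u=\sum U_{kn}e_k\phi_n$. The operator $\partial_x^a$ acts diagonally on the Fourier index with symbol $(ik)^a$. The velocity word $v^p\partial_v^q$ (in any ordering) acts only on the Hermite factor. The ladder relations $v=(a_++a_-)/\sqrt2$ and $\partial_v=(a_--a_+)/\sqrt2$ express any word of length $j=p+q$ as a sum of at most $2^j$ products of $j$ ladder operators. Each ladder operator maps $\phi_n$ to a multiple of $\phi_{n\pm1}$ with coefficient at most $\sqrt{n+1}$. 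Hence, for fixed $k$, the velocity word satisfies
\[
 \norm{\textstyle\sum_n c_n\, \mathrm{word}\,\phi_n}_{L^2_v}^2\le C_j\sum_n (2n+2)^{j}|c_n|^2 .
\]
Here the output may have Hermite index up to $N-1+j$, outside $S_{K,N}$. This does not matter, since the bound is in $L^2(\T\times\R)$ and the word is applied exactly, not projected. Equivalently, the quadratic form bound $\norm{\mathrm{word}\,\phi}\le C\norm{\HH^{j/2}\phi}$ holds on the Schwartz class, uniformly in $N$.

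Next, I will combine the two factors. Orthogonality of the $e_k$ decouples the Fourier modes, and for each fixed $k$ the velocity estimate above applies. The square of the left side of \eqref{eq:vp-h-mixed} is therefore bounded by
\[
 C\sum_{k,n} |k|^{2a}(2n+2)^{p+q}|U_{kn}|^2\le C\sum_{k,n}(1+k^2)^{a}(2n+2)^{p+q}|U_{kn}|^2 .
\]
With $\alpha=a$ and $\beta=(p+q)/2$ we have $\alpha+\beta\le 2s$ in the appropriate weighted sense. Young's inequality $x^{\alpha}y^{\beta}\le 1+x^{2s}+y^{2s}$ holds for $x=1+k^2\ge1$, $y=2n+2\ge1$ whenever $\alpha+\beta\le 2s$, where $\alpha$ counts powers of $1+k^2$ and $2\beta$ counts ladder steps. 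Since $a+p+q\le 2s$, we have $a\le 2s$ and $(p+q)/2\le s$, which is more than enough. Summing then gives exactly $\norm U_F^2+\norm{A_K^sU}_F^2+\norm{UH_N^s}_F^2=\E_s(u)$, because $A^s$ and $\HH^s$ have diagonal symbols $(1+k^2)^s$ and $(2n+2)^s$.

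The bounded-coefficient clause follows by pulling out the supremum norm of the multiplier after the $L^2$ norm is formed. The constants depend only on $s$ (through $C_j$ with $j\le 2s\le4$ and the Young constant), not on $K$ or $N$.

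The main obstacle is the velocity step: proving the ladder-operator bound uniformly for arbitrary orderings without relying on the projection. I would handle it by the explicit combinatorial expansion into at most $2^{j}$ ladder monomials. Each monomial of length $j$ applied to $\phi_n$ yields a single Hermite function with coefficient bounded by $\prod_{i=1}^{j}\sqrt{n+i}\le C_j(2n+2)^{j/2}$. The images of distinct $n$ under one monomial are orthogonal, since the index shift is fixed, so the sum over $n$ remains controlled without cross terms.
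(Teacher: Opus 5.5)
Your route is the paper's. You expand each velocity word by the Hermite ladder relations into at most $2^{p+q}$ monomials, each with a fixed index shift and coefficients $O((n+1)^{(p+q)/2})$. You then decouple the Fourier modes and dominate the resulting weight by the symbol $1+(1+k^2)^{2s}+(2n+2)^{2s}$ of $\E_s$. Your velocity step is correct, and you justify it more explicitly than the paper does. There are no cross terms because each monomial has a fixed net shift, and leaving $S_{K,N}$ is harmless because the word is applied exactly.

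The final weight comparison, however, is wrong as written.

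\textbf{Wrong monomial.} After squaring, the weight you must dominate is $(1+k^2)^a(2n+2)^{p+q}$. In your notation this is $x^\alpha y^{2\beta}$, not $x^\alpha y^\beta$. Your Young inequality under $\alpha+\beta\le 2s$ only controls the smaller quantity $x^a y^{(p+q)/2}$, so it says nothing about the actual weight.

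\textbf{False sufficiency claim.} You also claim that the separate bounds $a\le 2s$ and $(p+q)/2\le s$ are ``more than enough.'' This is false. For $s=1$, $a=2$, $p+q=2$ both bounds hold, yet $(1+k^2)^2(2n+2)^2$ is not bounded by $C(1+(1+k^2)^2+(2n+2)^2)$ uniformly in $K,N$. Indeed $\norm{\partial_x^2 v^2 u}$ is a fourth-order mixed quantity that $\E_1$ does not control.

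\textbf{Fix.} What is needed is the joint hypothesis $a+p+q\le 2s$, used as follows. For $X=1+k^2\ge 1$ and $Y=2n+2\ge 1$, we have $X^aY^{p+q}\le\max(X,Y)^{a+p+q}\le\max(X,Y)^{2s}\le X^{2s}+Y^{2s}$. This is exactly the paper's inequality $|k|^{2a}(2n+2)^{p+q}\le C_s\bigl(1+(1+k^2)^{2s}+(2n+2)^{2s}\bigr)$. With this one-line replacement your proof is complete and coincides with the paper's.
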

\begin{proof}
The Hermite identities
\[
 \begin{aligned}
 v\phi_n&=\sqrt{n/2}\,\phi_{n-1}+\sqrt{(n+1)/2}\,\phi_{n+1},\\
 \partial_v\phi_n&=\sqrt{n/2}\,\phi_{n-1}-\sqrt{(n+1)/2}\,\phi_{n+1}
 \end{aligned}
\]
show that a word of length $r$ in $v,\partial_v$ is bounded from the graph
norm of $\HH^{r/2}$ to $L^2$: there are at most $2^r$ shifts, with coefficients
bounded by $C_r(n+1)^{r/2}$. This is the elementary $L^2$ case of
Hermite--Sobolev estimates \cite{BongioanniTorrea2006}.
Applying this to each spatial Fourier mode, and using
\[
 |k|^{2a}(2n+2)^{p+q}
 \le C_s\bigl(1+(1+k^2)^{2s}+(2n+2)^{2s}\bigr)
\]
for $a+p+q\le2s$, proves the result.
\end{proof}

\subsubsection{First principle: mesh-uniform initial energy}
If $g_0=wf_0$ satisfies $\E_s(g_0)<\infty$, projection and \eqref{eq:vp-h-svd} yield
\begin{equation}\label{eq:vp-h-initial}
 \sup_{K,N}\E_s\bigl(\Tr_r\PP_{K,N}g_0\bigr)
 \le\E_s(g_0)=q_0.
\end{equation}
Thus any $Q^2>q_0$ supplies a common initial margin. Smooth Maxwellian data
with smooth spatial modulation satisfy this condition for both $s=1$ and $2$.

\subsubsection{Second principle: controlled energy growth}
\begin{theorem}\label{thm:vp-h-growth}
For $s=1,2$, every real $u\in S_{K,N}$ satisfies
\begin{equation}\label{eq:vp-h-growth}
 D\E_s(u)[F_{K,N}(u)]
 \le C_s\bigl(1+\E_s(u)^{1/2}\bigr)\E_s(u).
\end{equation}
Consequently, on $\E_s(u)\le Q^2$, the second principle holds with
$c_{s,Q}=C_s(1+Q)$, independent of $K,N$.
\end{theorem}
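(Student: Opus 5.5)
We compute $D\E_s(u)[F_{K,N}(u)]=2\ip{u}{F}+2\ip{A^su}{A^sF}+2\ip{\HH^su}{\HH^sF}$ with $F=\PP\LL_uu$. Since $u\in S_{K,N}$ and, by \eqref{eq:vp-h-commute}, $\PP$ commutes with $A^s$ and $\HH^s$ and is self-adjoint, every projection can be dropped. It therefore suffices to bound
\[
 \ip{u}{\LL_uu},\qquad \ip{A^su}{A^s\LL_uu},\qquad \ip{\HH^su}{\HH^s\LL_uu}
\]
for the continuous operator $\LL_u=-v\partial_x+E[u]T$ applied to the smooth function $u$. The strategy is the classical commutator argument: isolate the skew-adjoint principal parts, which cancel, and estimate the commutators by Lemma~\ref{lem:vp-h-mixed}.

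\textbf{Zeroth order.} The transport term $-v\partial_x$ is skew, so $\ip{u}{v\partial_xu}=0$. For the field term, $\partial_v$ is skew, so
\[
 \ip{u}{E\,Tu}=-\ip{u}{E\,b\,u}\le c_E\norm{b}_\infty\norm u^3
\]
by \eqref{eq:vp-h-density}. This is bounded by $C\E_s^{3/2}$.

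\textbf{Spatial part.} $A^s$ commutes with $v\partial_x$, so the transport term contributes nothing. For the field term, write
\[
 A^s(E\,Tu)=E\,T A^su+[A^s,E]Tu .
\]
The first piece is handled exactly as in the zeroth-order case, with $A^su$ in place of $u$, giving $\le c_E\norm{b}_\infty\norm u\,\norm{A^su}^2$. The commutator $[A^s,E]$ is a sum of terms $(\partial_x^jE)\,\partial_x^{a}$ with $j\ge1$ and $j+a\le2s$. By \eqref{eq:vp-h-fieldhigher}, each $\partial_x^jE$ is bounded in $L^\infty$ by $\norm u_{H^{2s}_xL^2_v}\le C\E_s^{1/2}$. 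The remaining factor $\partial_x^a Tu$ is of total order $a+1\le 2s$ because $j\ge1$, so Lemma~\ref{lem:vp-h-mixed} bounds it by $C\E_s^{1/2}$. Cauchy--Schwarz with $\norm{A^su}$ then gives $C\E_s^{3/2}$.

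\textbf{Velocity part.} This is the main obstacle, since $\HH^s$ commutes with neither $v\partial_x$ nor $T$. I would use
\[
 [\HH,v]=-2\partial_v,\qquad [\HH,\partial_v]=-2v,
\]
so that $[\HH^s,v\partial_x]$ is a sum of words in $v,\partial_v$ of length $\le 2s-1$ composed with $\partial_x$. Its contribution is linear, and it is bounded by $C\E_s$ using Lemma~\ref{lem:vp-h-mixed}, since the total order is $\le 2s$.

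For the field term,
\[
 \HH^s(E\,Tu)=E\,\partial_v\HH^su+E[\HH^s,\partial_v]u-E\,\HH^s(bu).
\]
The first piece vanishes against $\HH^su$ because $\partial_v$ is skew and $E$ does not depend on $v$. The second piece is a word of length $2s$ (after accounting for the commutator) and is controlled by Lemma~\ref{lem:vp-h-mixed}. In the third piece, expanding $\HH^s(bu)$ by Leibniz gives words of length $\le 2s$ multiplied by bounded derivatives of $b$. Lemma~\ref{lem:vp-h-mixed}, whose statement covers any ordering and bounded coefficients, bounds all of these. Every such term carries the factor $\norm E_\infty\le c_E\norm u$, so each is $\le C\E_s^{3/2}$.

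The delicate point is tracking orders: every commutator must lose one derivative so that total order stays $\le 2s$, and every term must be paired only with $\HH^su$ or $A^su$.

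Summing the three contributions gives \eqref{eq:vp-h-growth}. On $\E_s\le Q^2$ we have $C_s(1+\E_s^{1/2})\E_s\le C_s(1+Q)(1+\E_s)$, which yields $c_{s,Q}=C_s(1+Q)$ independent of $K,N$.
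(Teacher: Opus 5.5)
Your proposal is correct and follows essentially the same route as the paper: the projection is removed via \eqref{eq:vp-h-commute}, the skew-adjoint parts of $\LL_u$ cancel in each pairing, leaving only the bounded $-Eb$ term, and the spatial and velocity commutators are bounded through \eqref{eq:vp-h-fieldhigher} and Lemma~\ref{lem:vp-h-mixed}, which gives $C_s(\E_s+\E_s^{3/2})$. The differences are only bookkeeping: you commute $\HH^s$ past $\partial_v$ and bound $E\,\HH^s(bu)$ as a whole instead of commuting past $T$, and $[\HH^s,\partial_v]$ actually consists of words of length at most $2s-1$, not $2s$, which is harmless because the lemma allows order $2s$.
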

\begin{proof}
Fix $u$ and write $E=E[u]$. For any sufficiently regular $z$,
integration by parts gives
\begin{equation}\label{eq:vp-h-frozen}
 \begin{aligned}
 \re\ip z{\LL_u z}=-\int_{\T\times\R}E(x)b(v)|z(x,v)|^2\,dx\,dv,
 \\
 |\re\ip z{\LL_u z}|\le\tfrac12\norm E_\infty\norm z^2.
 \end{aligned}
\end{equation}
The streaming and the differentiated force remain skew-adjoint. The bounded
zero-order term is the sole effect of the weight change on this identity.
Because of \eqref{eq:vp-h-commute}, the Galerkin projection disappears from every
energy pairing. With $R=\E_s(u)^{1/2}$,
\begin{align}\label{eq:vp-h-energyidentity}
 \tfrac12D\E_s(u)[F_{K,N}(u)]
 ={}&\sum_{G\in\{I,A^s,\HH^s\}}\re\ip{Gu}{\LL_u Gu}\nonumber\\
 &+\re\ip{A^s u}{[A^s,E]Tu}\nonumber\\
 &+\re\ip{\HH^s u}{-[\HH^s,v]\partial_xu+E[\HH^s,T]u}.
\end{align}
The first line is bounded in absolute value by $C R\E_s(u)$, using
\eqref{eq:vp-h-density} and \eqref{eq:vp-h-frozen}.

For the spatial commutator, Leibniz' rule expands $[A^s,E]Tu$ into terms
\[
 c_{aj}(\partial_x^jE)\,\partial_x^{a-j}Tu,
 \qquad a\le2s,\quad 1\le j\le a.
\]
By \eqref{eq:vp-h-fieldhigher}, $\norm{\partial_x^jE}_\infty\le C_sR$ for
$j\le2s$. Each derivative on $u$ has total order at most $2s$ because
$a-j+1\le2s$. Lemma~\ref{lem:vp-h-mixed} therefore gives
\begin{equation}\label{eq:vp-h-xcomm}
 \norm{[A^s,E]Tu}\le C_sR^2.
\end{equation}

The velocity commutators are explicit:
\begin{equation}\label{eq:vp-h-hcomm}
 [\HH,v]=-2\partial_v,\qquad
 [\HH,\partial_v]=-2v,\qquad
 [\HH,T]=-2v+b''+2b'\partial_v.
\end{equation}
For $s=1$, Lemma~\ref{lem:vp-h-mixed} directly controls
$[\HH,v]\partial_xu$ and $[\HH,T]u$ by $C_1R$.
For $s=2$, use $[\HH^2,B]=\HH[\HH,B]+[\HH,B]\HH$.
Then $[\HH^2,v]\partial_x$ consists of words of total order at most four,
and $[\HH^2,T]$ consists of words in $v,\partial_v$ of order at most three,
with bounded coefficients made from $b',\ldots,b^{(4)}$ and constants.
For example $[\HH^2,v]=-2(\HH\partial_v+\partial_v\HH)$.
After expanding derivatives of coefficients, Lemma~\ref{lem:vp-h-mixed} yields
\begin{equation}\label{eq:vp-h-vcommbound}
 \norm{[\HH^s,v]\partial_xu}+\norm{[\HH^s,T]u}\le C_sR.
\end{equation}
Insert \eqref{eq:vp-h-xcomm}--\eqref{eq:vp-h-vcommbound} into
\eqref{eq:vp-h-energyidentity}. The result is
$D\E_s\le C_s(R^2+R^3)$, as asserted.
This holds for every state in the energy ball, not only a selected trajectory.
\end{proof}

\subsubsection{Third principle: one-sided stability}
\begin{theorem}\label{thm:vp-h-onesided}
If $\E_s(u),\E_s(\widetilde u)\le Q^2$ for either $s=1$ or $s=2$, then
\begin{equation}\label{eq:vp-one-sided}
 \re\ip{u-\widetilde u}{F_{K,N}(u)-F_{K,N}(\widetilde u)}
 \le\mu_Q\norm{u-\widetilde u}^2,\qquad
 \mu_Q=2c_EQ.
\end{equation}
\end{theorem}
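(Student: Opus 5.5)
Write $w=u-\widetilde u$, $E=E[u]$, $\widetilde E=E[\widetilde u]$. Since $\PP$ is self-adjoint and $w\in S_{K,N}$, the projection drops out of the pairing:
\[
 \re\ip{w}{F_{K,N}(u)-F_{K,N}(\widetilde u)}=\re\ip{w}{\LL_uu-\LL_{\widetilde u}\widetilde u}.
\]
The streaming term is linear in its argument. The force term is bilinear in the field and the function, so we split
\[
 \LL_uu-\LL_{\widetilde u}\widetilde u
 =\bigl(-v\partial_x w+E\,Tw\bigr)+(E-\widetilde E)\,T\widetilde u
 =\LL_u w+(E-\widetilde E)\,T\widetilde u .
\]
This isolates a frozen-field part acting on the difference and a field-difference part acting on a fixed state in the energy ball.

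For the first part, apply the frozen-coefficient identity \eqref{eq:vp-h-frozen} with $z=w$. Since $w\in S_{K,N}$ is a finite Fourier--Hermite sum, it is smooth and rapidly decaying, so the integration by parts is legitimate. This gives $|\re\ip{w}{\LL_uw}|\le\tfrac12\norm{E}_\infty\norm w^2$. Then \eqref{eq:vp-h-density} and $\norm u\le\E_s(u)^{1/2}\le Q$ yield the bound $\tfrac12c_EQ\norm w^2$.

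For the second part, Cauchy--Schwarz gives
\[
 |\ip{w}{(E-\widetilde E)T\widetilde u}|\le\norm{E-\widetilde E}_\infty\norm{T\widetilde u}\,\norm w .
\]
Linearity of $z\mapsto E[z]$ and \eqref{eq:vp-h-density} give $\norm{E-\widetilde E}_\infty\le c_E\norm w$. It remains to bound $\norm{T\widetilde u}=\norm{\partial_v\widetilde u-b\widetilde u}\le\norm{\partial_v\widetilde u}+\tfrac12\norm{\widetilde u}$.

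This last bound is the main obstacle. Lemma~\ref{lem:vp-h-mixed} only gives $C_s Q$ with an unspecified constant, whereas the statement asks for the explicit value $\mu_Q=2c_EQ$. I would use the Hermite structure directly. We have $\norm{\partial_v z}^2+\norm{vz}^2=\ip{z}{(\HH-1)z}$, and the eigenvalues of $\HH$ are at least $2$, so $\HH\le\HH^s$ in the form sense for $s=1,2$. Hence
\[
 \norm{\partial_v\widetilde u}^2\le\ip{\widetilde u}{\HH^s\widetilde u}-\norm{\widetilde u}^2
 \le\norm{\widetilde u}\,\norm{\HH^s\widetilde u}-\norm{\widetilde u}^2 .
\]
With $a=\norm{\widetilde u}$ and $c=\norm{\HH^s\widetilde u}$, where $a^2+c^2\le Q^2$, this gives $\norm{T\widetilde u}\le\sqrt{ac-a^2}+a/2$. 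An elementary maximization shows this is at most $\tfrac32Q$, and the cruder bound $\sqrt{ac}+a/2\le\tfrac32Q$ already suffices.

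Combining the two parts,
\[
 \re\ip{w}{F_{K,N}(u)-F_{K,N}(\widetilde u)}\le\bigl(\tfrac12c_EQ+\tfrac32c_EQ\bigr)\norm w^2=2c_EQ\norm w^2 ,
\]
which is \eqref{eq:vp-one-sided}. The roles of $u$ and $\widetilde u$ are symmetric, so either splitting works. No constant depends on $K$ or $N$, because only the exact identities \eqref{eq:vp-h-frozen}, \eqref{eq:vp-h-density} and the spectrum of $\HH$ were used.
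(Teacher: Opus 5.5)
Your proposal is correct and follows essentially the same route as the paper: after the projection drops out, you use the same splitting $\re\ip{w}{\LL_u w}+\re\ip{w}{E[w]\,T\widetilde u}$, the frozen-field bound $\tfrac12c_EQ\norm w^2$, and $\norm{T\widetilde u}\le\tfrac32Q$ from the Hermite spectrum, which together give $\tfrac32c_EQ\norm w^2$ for the second term. Your bound on $\norm{\partial_v\widetilde u}$, obtained from $\ip{\widetilde u}{(\HH-1)\widetilde u}$ and Cauchy--Schwarz, is only a slightly sharper variant of the paper's $\norm{\partial_v a}\le\norm{\HH^{1/2}a}\le\norm{\HH^s a}$, which uses $\HH\ge2I$.
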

\begin{proof}
Put $z=u-\widetilde u$. Linearity of the field and orthogonality of $\PP$ give
\[
 \re\ip z{F_{K,N}(u)-F_{K,N}(\widetilde u)}
 =\re\ip z{\LL_u z}+\re\ip z{E[z]T\widetilde u}.
\]
The first term is at most $c_EQ\norm z^2/2$.
Since $\HH\ge2I$ and $\norm{\partial_v a}\le\norm{\HH^{1/2}a}$,
we have $\norm{T\widetilde u}\le3Q/2$ for both energies.
Equation~\eqref{eq:vp-h-density} bounds the second term by
$3c_EQ\norm z^2/2$.
\end{proof}
The velocity weight is essential here: integration in $v$ is not a bounded
functional on unweighted $L^2(\R)$, so simply taking an unweighted Hermite
method would leave a gap in this particular one-sided estimate.

\Needspace{5\baselineskip}
\subsection{Mesh-robust first-order error}\label{sec:vp-first-order}
\subsubsection{Time regularity of the continuous solution}
The second-order energy $\E_1$ already suffices for all three principles.
To compare directly with the continuous VP solution, we use $\E_2$ to
control its second time derivative.
Write $M_j(z)=\E_j(z)^{1/2}$ and distinguish the continuous vector field
\[
 F(u)=\LL_u u=-vu_x+E[u]Tu
\]
from its Galerkin restriction $F_{K,N}(u)=\PP F(u)$ on $S_{K,N}$.

\begin{lemma}[Continuous time regularity]\label{lem:vp-consistency}
Let $f$ be a sufficiently regular solution of \eqref{eq:vp-continuous} on $[0,T]$,
set $g=wf$, and assume
\begin{equation}\label{eq:vp-h-continuous-localization}
 \sup_{0\le t\le T}\E_2(g(t))\le Q^2.
\end{equation}
Then
\begin{equation}\label{eq:vp-h-consistency}
 \sup_{0\le t\le T}\norm{g''(t)}\le C_Q.
\end{equation}
\end{lemma}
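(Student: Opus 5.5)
Differentiating the equation $g_t=F(g)=-vg_x+E[g]Tg$ once in time gives
\[
 g''=-v\,\partial_xg'+E[g]\,Tg'+E[g']\,Tg ,
\]
because $E[\cdot]$ is linear in its argument. The plan is to bound each of the three terms in $L^2(\T\times\R)$ by a constant depending only on $Q$. We do this by showing that $g'=F(g)$ is controlled in the second-order graph energy: in particular $\E_1(g')\le C_Q$ for every $t\in[0,T]$. This is where $\E_2$ enters. Two derivatives of $F(g)$ cost three derivatives or moments of $g$, which are at most the four controlled by $\E_2$ through Lemma~\ref{lem:vp-h-mixed}.

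\textbf{Step 1: bound $g'$ in mixed norms.} Write $g'=-vg_x+E[g]\partial_vg-E[g]b\,g$. For any word $\partial_x^a v^p\partial_v^q$ with $a+p+q\le 2$, apply it to each term and expand by Leibniz' rule.
\begin{itemize}
\item The streaming term $-vg_x$ becomes a word of total order at most $4$ acting on $g$, with an ordering permitted by Lemma~\ref{lem:vp-h-mixed}. It is therefore bounded by $C\E_2(g)^{1/2}\le CQ$.
\item In the force terms, $x$-derivatives may fall on $E[g]$. By \eqref{eq:vp-h-fieldhigher} with $m\le2$, and since $\norm g_{H^2_xL^2_v}\le C\E_2(g)^{1/2}$, we get $\norm{\partial_x^jE[g]}_\infty\le CQ$. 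The velocity operators act only on $\partial_vg$ or on $b\,g$, and all derivatives of $b$ are bounded. Each resulting word has order at most $3$, so Lemma~\ref{lem:vp-h-mixed} with $s=2$ bounds it by $CQ$.
\end{itemize}
This gives $\norm{\partial_x^av^p\partial_v^qg'}\le C(Q+Q^2)$ for $a+p+q\le2$. In particular $\norm{g'}_{H^1_xL^2_v}$, $\norm{\partial_x g'}$, $\norm{\partial_vg'}$ and $\norm{vg'}$ are all bounded by $C_Q$.

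\textbf{Step 2: bound the three terms of $g''$.}
\begin{itemize}
\item $\norm{v\partial_xg'}\le C_Q$ directly from Step 1, with $a=p=1$.
\item $\norm{E[g]Tg'}\le\norm{E[g]}_\infty\bigl(\norm{\partial_vg'}+\tfrac12\norm{g'}\bigr)\le c_EQ\,C_Q$, using \eqref{eq:vp-h-density} and $\norm b_\infty=1/2$.
\item $\norm{E[g']Tg}\le c_E\norm{g'}\,\norm{Tg}$. Here $\norm{Tg}\le 3Q/2$ exactly as in the proof of Theorem~\ref{thm:vp-h-onesided}, and $\norm{g'}\le C_Q$ from Step 1.
\end{itemize}
Combining the three bounds gives \eqref{eq:vp-h-consistency}.

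\textbf{Main obstacle: justifying the manipulations.} The formula for $g''$ requires $g\in C^2([0,T];L^2)$, that is, that $t\mapsto E[g(t)]$ and $t\mapsto g(t)$ are differentiable enough. The hypothesis that $f$ is sufficiently regular is what supplies this, and it is all that is assumed. One also needs $\E_2(g')$-type quantities to be finite before they are bounded. Since the bounds are purely a posteriori in terms of $\E_2(g(t))$, only this qualitative regularity is used. The only genuinely delicate point is the bookkeeping in Step 1. One must check that every word stays within total order $4$ after commuting $v$ past $\partial_v$; the commutators only lower the order. One must also check that the field derivatives never exceed order $2$, which keeps them inside \eqref{eq:vp-h-fieldhigher} with the $H^2_x$ control furnished by $\E_2$.
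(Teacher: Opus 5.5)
Your proposal is correct and follows essentially the paper's route: your formula for $g''$ is exactly $F'(g)F(g)$, your Step~1 is the paper's mapping bound $M_1(F(g))\le C(1+M_2(g))M_2(g)$ written out word by word via Lemma~\ref{lem:vp-h-mixed} and \eqref{eq:vp-h-fieldhigher}, and your Step~2 is the linearization bound $\norm{F'(u)z}\le C(1+M_1(u))M_1(z)$ applied with $z=g'$. You arrive at the same constant of order $(1+Q)^2Q$ as the paper.
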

\begin{proof}
Lemma~\ref{lem:vp-h-mixed}, Leibniz' rule and \eqref{eq:vp-h-fieldhigher} give
\begin{equation}\label{eq:vp-h-mapping}
 \begin{aligned}
 M_1(F(u))&\le C(1+M_2(u))M_2(u),\\
 \norm{F'(u)z}&\le C(1+M_1(u))M_1(z).
 \end{aligned}
\end{equation}
The prime denotes the Fr\'echet derivative with respect to the state.
In the first estimate, $A(vu_x)$ and $\HH(vu_x)$ have total order at most
four, while $A(ETu)$ and $\HH(ETu)$ have order at most three on $u$ and
at most two spatial derivatives on $E$. For the second estimate, linearity
of the field gives
\[
 F'(u)z=-vz_x+E[u]Tz+E[z]Tu.
\]
The mixed term $vz_x$ is controlled by $M_1(z)$ and the remaining terms
by \eqref{eq:vp-h-density}. Since $g'=F(g)$, the chain rule now yields
\[
 \norm{g''}=\norm{F'(g)F(g)}
 \le C(1+M_1(g))M_1(F(g))
 \le C(1+Q)^2Q.
\]
\end{proof}
The continuous energy calculation is the proof of
Theorem~\ref{thm:vp-h-growth} with the projection omitted. On the interval of
existence of a sufficiently regular solution, an initial margin
$\E_2(g_0)=q_0<Q^2$ and $q_0e^{c_{2,Q}T}<Q^2$ therefore ensure
\eqref{eq:vp-h-continuous-localization} by a standard continuation argument.
This supplies a local-in-time justification of the continuous energy
assumption.

\subsubsection{Robust first-order error estimate}
Identify a coefficient matrix with its function in $S_{K,N}$ and consider
\[
 \widehat Y^{n+1}-Y^n-\tau_nF_{K,N}(\widehat Y^{n+1})=r_{n+1},\qquad
 Y^{n+1}=\Tr_{\theta_n}\widehat Y^{n+1}.
\]
Define $g_{\mathrm{SAT}}^n=Y^n$ and the reconstructed distribution
$f_{\mathrm{SAT}}^n=g_{\mathrm{SAT}}^n/w$.
Residual and truncation norms are the coefficient Frobenius norm,
equivalently $\norm{w\,\cdot}$ for distributions.

\begin{theorem}[Robust error for the continuous VP equation]\label{thm:vp-first-order}
Suppose the continuous solution satisfies the hypotheses of
Lemma~\ref{lem:vp-consistency}, and all accepted implicit stages satisfy
$\E_2(\widehat Y^{n+1})\le Q^2$. Assume
$\norm{r_{n+1}}\le\varepsilon_n$,
$\norm{Y^{n+1}-\widehat Y^{n+1}}\le\theta_n$, and
$\mu_Q\tau_n\le\zeta<1$. Set
\[
 \tau=\max_n\tau_n,\qquad
 \delta_{K,N}=(1+(K+1)^2)^{-1}+(2N+2)^{-1}.
\]
Then
\begin{equation}\label{eq:vp-h-pdeerror}
 \begin{aligned}
 \max_{t_n\le T}\norm{f_{\mathrm{SAT}}^n-f(t_n)}
 &\le\max_{t_n\le T}\norm{w(f_{\mathrm{SAT}}^n-f(t_n))}\\
 &\le C_{Q,T,\zeta}\left[
 \norm{Y^0-\PP(wf_0)}+\tau+\delta_{K,N}
 +\sum_n(\varepsilon_n+\theta_n)\right].
 \end{aligned}
\end{equation}
The constant and the stability step restriction are independent of
$K,N$, the retained ranks and small singular values.
\end{theorem}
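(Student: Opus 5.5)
The plan is to apply Theorem~\ref{thm:linearstability} with the comparison states $X^n=\PP g(t_n)$, where $g=wf$ is the continuous solution, and then add the projection error. The first inequality in \eqref{eq:vp-h-pdeerror} is immediate from $w\ge1$. For the second, the triangle inequality gives
\[
 \norm{Y^n-g(t_n)}\le\norm{Y^n-\PP g(t_n)}+\norm{(I-\PP)g(t_n)}.
\]
The projection term is handled spectrally, using $\E_1(g(t))\le\E_2(g(t))\le Q^2$. Modes outside $S_{K,N}$ have either $|k|\ge K+1$ or $n\ge N$. Hence
\[
 \norm{(I-\PP)g}\le(1+(K+1)^2)^{-1}\norm{Ag}+(2N+2)^{-1}\norm{\HH g}\le Q\,\delta_{K,N}.
\]

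\textbf{Localization and one-sided stability.} The comparison states lie in the $\E_2$-ball, since $\E_2(\PP g)\le\E_2(g)\le Q^2$ by \eqref{eq:vp-h-commute}. The accepted stages lie there by hypothesis. Theorem~\ref{thm:vp-h-onesided} then gives \eqref{eq:onesided} with $\mu_Q=2c_EQ$, which is independent of $K,N$. Theorem~\ref{thm:linearstability} therefore bounds $\max e_n$ by $\exp(\mu_QT/(1-\zeta))$ times $e_0+\sum(\norm{d_{n+1}}+\eps_n+\theta_n)$, where $e_0=\norm{Y^0-\PP g_0}$. It remains to bound the defects $d_{n+1}=X^{n+1}-X^n-\tau_nF_{K,N}(X^{n+1})$.

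\textbf{Defect splitting.} Since $\PP$ is time-independent and $g'=F(g)$, write $\PP g(t_{n+1})-\PP g(t_n)=\tau_n\PP F(g(t_{n+1}))-\PP\int_{t_n}^{t_{n+1}}(s-t_n)g''(s)\,ds$. This gives
\[
 d_{n+1}=-\PP\!\int_{t_n}^{t_{n+1}}(s-t_n)g''\,ds+\tau_n\bigl[\PP F(g(t_{n+1}))-\PP F(\PP g(t_{n+1}))\bigr].
\]
Lemma~\ref{lem:vp-consistency} bounds the first term by $C_Q\tau_n^2/2$, so these contributions sum to $C_QT\tau$. The second term is the Galerkin consistency error. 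It is bounded by $\tau_n\norm{F(g)-F(\PP g)}$, and its sum is at most $T\sup_t\norm{F(g)-F(\PP g)}$.

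\textbf{Main obstacle: the Galerkin consistency term.} This is the step I expect to be hardest, because a naive Lipschitz bound on $F$ in $L^2$ loses derivatives. I would use the linear-in-field structure directly. With $z=(I-\PP)g$,
\[
 F(g)-F(\PP g)=-vz_x+E[g]Tz-E[z]T\PP g+E[z]Tz .
\]
Each term costs one derivative or one moment on $z$, so each should be bounded by $C(1+Q)\,\norm{z}_{\E_1\text{-type}}$. The spectral tail argument above, applied one order higher with Lemma~\ref{lem:vp-h-mixed}, should give $M_1(z)\lesssim\delta_{K,N}M_2(g)\le Q\delta_{K,N}$. This works because $A$ and $\HH$ commute with $\PP$: a tail mode gains a factor $\delta_{K,N}$ when one passes from second to fourth order.

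The field terms use \eqref{eq:vp-h-density}, and $\norm{T\PP g}\le CQ$. The streaming term $vz_x$ is a mixed word of order two. Lemma~\ref{lem:vp-h-mixed} applied to $z$ controls it by $C\E_1(z)^{1/2}\le CQ\delta_{K,N}$. The care needed is in checking this tail estimate mode-by-mode in the mixed $(k,n)$ weights. After it, all constants depend only on $Q,T,\zeta$, and collecting terms yields \eqref{eq:vp-h-pdeerror}.
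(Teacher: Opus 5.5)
Your proposal is correct and follows the paper's proof essentially step for step: the same comparator $X^n=\PP g(t_n)$, the same splitting of $d_{n+1}$ into the $g''$ remainder and the Galerkin term $\PP\{F(g)-F(\PP g)\}$, and the same bound of that term by $C_Q M_1((I-\PP)g)\le C_Q\delta_{K,N}$. The paper obtains that bound by integrating $\norm{F'(u)z}\le C(1+M_1(u))M_1(z)$ along the segment from $\PP g$ to $g$, which is equivalent to your direct expansion because $F$ is quadratic; note one algebraic slip, since with $z=(I-\PP)g$ the correct identity is $F(g)-F(\PP g)=-vz_x+E[g]Tz+E[z]T\PP g$ rather than the signs you wrote, but every term is of a type you already bound, and the mode-by-mode tail check you flag closes on the cross weights via $a^4+\lambda^4\ge 2a^2\lambda^2$ (with $a=1+k^2$, $\lambda=2n+2$).
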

\begin{proof}
Since $w\ge1$ and $g=wf$,
\[
 \norm{f_{\mathrm{SAT}}^n-f(t_n)}
 \le\norm{w(f_{\mathrm{SAT}}^n-f(t_n))}
 =\norm{g_{\mathrm{SAT}}^n-g(t_n)}.
\]
Decompose the last error as
\[
 \norm{g_{\mathrm{SAT}}^n-g(t_n)}
 \le\norm{Y^n-\PP g(t_n)}+\norm{(I-\PP)g(t_n)}.
\]
The diagonal spectral weights give the projection estimates
\begin{equation}\label{eq:vp-h-projectiontail}
 M_1((I-\PP)g)\le C\delta_{K,N}M_2(g),\qquad
 \norm{(I-\PP)g}\le C\delta_{K,N}M_2(g).
\end{equation}
Indeed, compare $1+a^2+\lambda^2$ with $1+a^4+\lambda^4$ on the
discarded modes, where $a=1+k^2$ and $\lambda=2n+2$. A discarded mode
has either $a\ge1+(K+1)^2$ or $\lambda\ge2N+2$.
Thus the projection error is at most $C_Q\delta_{K,N}$.

For the remaining error, take $X^n=\PP g(t_n)$ as the comparator. By
\eqref{eq:vp-h-commute} and the continuous energy bound,
\begin{equation}\label{eq:vp-h-comparator-energy}
 \E_2(\PP g(t))\le\E_2(g(t))\le Q^2.
\end{equation}
Theorem~\ref{thm:linearstability}, with the one-sided bound
\eqref{eq:vp-one-sided}, therefore gives
\[
 \max_{t_n\le T}\norm{Y^n-\PP g(t_n)}
 \le \exp\!\left(\frac{\mu_QT}{1-\zeta}\right)
 \left[\norm{Y^0-\PP g_0}+
       \sum_n\bigl(\norm{d_{n+1}}+\varepsilon_n+\theta_n\bigr)\right].
\]
It remains to bound the accumulated comparator defects. Since $g'=F(g)$,
\begin{align}\label{eq:vp-h-comparator-defect}
 d_{n+1}
 :={}&X^{n+1}-X^n-\tau_nF_{K,N}(X^{n+1})\nonumber\\
 ={}&\PP\{g(t_{n+1})-g(t_n)-\tau_ng'(t_{n+1})\}
       +\tau_nR_{K,N}(t_{n+1})\nonumber\\
 ={}&-\PP\int_{t_n}^{t_{n+1}}(s-t_n)g''(s)\,ds
       +\tau_nR_{K,N}(t_{n+1}),
\end{align}
where
\begin{equation}\label{eq:vp-h-spatial-residual}
 R_{K,N}(t):=\PP F(g(t))-F_{K,N}(\PP g(t))
            =\PP\{F(g(t))-F(\PP g(t))\}.
\end{equation}
The second bound in \eqref{eq:vp-h-mapping}, applied along the segment between
$g$ and $\PP g$, and \eqref{eq:vp-h-projectiontail} imply
\[
 \norm{R_{K,N}(t)}
 \le C_Q M_1((I-\PP)g(t))\le C_Q\delta_{K,N}.
\]
All states on this segment have $M_1$ bounded by $Q$, since projection
contracts both graph norms and $M_1\le M_2$.
Combining this estimate with Lemma~\ref{lem:vp-consistency} yields
\begin{equation}\label{eq:vp-h-defect-sum}
 \begin{aligned}
 \norm{d_{n+1}}&\le C_Q(\tau_n^2+\tau_n\delta_{K,N}),\\
 \sum_n\norm{d_{n+1}}&\le C_QT(\tau+\delta_{K,N}).
 \end{aligned}
\end{equation}
Substitution into the stability estimate, followed by the projection
bound and the error decomposition at the start of the proof, proves
\eqref{eq:vp-h-pdeerror}.
\end{proof}
The three principles proved above and SVD contraction allow
Theorem~\ref{thm:physical} to provide admissible stages on the common interval
specified by \eqref{eq:uniformsteps}--\eqref{eq:uniformtime}, under the stated energy-residual conditions.
Thus $\varepsilon_n+\theta_n=O(\tau_n^2)$ yields a total error of
order $O(\tau+\delta_{K,N})$, in addition to the initial error.

\section{Numerical experiments}\label{sec:numerics}
We first formulate an energy-constrained implicit SAT algorithm and relate its acceptance conditions to the error analysis. The experiments have three purposes: to verify mesh robustness and first-order convergence, to demonstrate the role of energy constraints in selecting the implicit solution, and to illustrate the step-size restriction when advancing into a larger energy region. All results are computed with the accompanying Python implementation.

\subsection{Energy-constrained implicit SAT}\label{sec:energy-algorithm}\label{sec:numerical-implementation}
We combine energy acceptance with the one-sided stability step restriction from Section~\ref{sec:robust-error}.
Fix an energy radius $Q$ with $\E_h(Y^0)<Q^2$, and a mesh-independent constant $\mu_Q\ge0$ satisfying \eqref{eq:onesided} on $\overline{\Omega_Q}$.
For a fixed $0<\zeta<1$, set
\[
 \tau_{\rm stab}=\begin{cases}\zeta/\mu_Q,&\mu_Q>0,\\+\infty,&\mu_Q=0.\end{cases}
\]
The radius and step restriction are coupled: increasing $Q$ requires a valid stability constant for the enlarged ball and a corresponding update of $\tau_{\rm stab}$.
For a trial step $\tau$ from $(t_n,Y^n)$, define the original backward-Euler residual
\begin{equation}\label{eq:algorithm-residual}
 H_{n,\tau}(Z)=Z-Y^n-\tau F_h(t_n+\tau,Z).
\end{equation}
Let $\eps(\tau)$ and $\theta(\tau)$ be the tolerances for the discrete $L^2$ residual and final SVD tail, respectively.
For first-order convergence, a standard choice is
\begin{equation}\label{eq:algorithm-tolerances}
 \eps(\tau)=C_r\tau^2,\qquad
 \theta(\tau)=C_\theta\tau^2,
\end{equation}
with constants independent of the mesh.

Algorithm~\ref{alg:energy-sat} separates the time-step controller from the implicit solver.
The routine \textsc{EnergyStage} attempts to solve $H_{n,\tau}(Z)=0$, initialized at $Y^n$, using low-rank factors; it returns a candidate and a success flag, or reports failure within its iteration limit.
Algorithm~\ref{alg:energy-newton} implements \textsc{EnergyStage} by low-rank Newton--Krylov iteration with energy-constrained backtracking.

\begin{algorithm}[t]
\caption{Energy-constrained implicit SAT with step rejection}\label{alg:energy-sat}
\begin{algorithmic}[1]
\Require $Y^0$, $T>0$, $Q$, $\tau_{\rm stab}$, trial step $\tau_{\rm try}>0$;
\Statex \hspace{\algorithmicindent} tolerances $\eps,\theta$; reduction factor $0<\beta<1$; retry limit $R$.
\State $t_0\gets0$, $n\gets0$
\While{$t_n<T$}
  \State $\tau\gets\min\{\tau_{\rm try},T-t_n\}$; $\mathrm{accepted}\gets\mathrm{false}$
  \While{$\tau>\tau_{\rm stab}$}
    \State $\tau\gets\beta\tau$
  \EndWhile
  \For{$k=0,\ldots,R$}
    \State $(Z,\mathrm{success})\gets\Call{EnergyStage}{t_n,Y^n,\tau,Q,\eps(\tau)}$
    \If{$\mathrm{success}$}
      \State Recompute $r\gets H_{n,\tau}(Z)$ from the original equation.
      \If{$\E_h(Z)\le Q^2$ and $\|r\|_h\le\eps(\tau)$}
        \State $\mathrm{accepted}\gets\mathrm{true}$; \textbf{break}
      \EndIf
    \EndIf
    \State $\tau\gets\beta\tau$ \Comment{Retry from the same $(t_n,Y^n)$.}
  \EndFor
  \If{not $\mathrm{accepted}$}
    \State \Return the accepted trajectory and a failure flag at $t_n$.
  \EndIf
  \State $\tau_n\gets\tau$, $\widehat Y^{n+1}\gets Z$, $\theta_n\gets\theta(\tau_n)$
  \State $Y^{n+1}\gets\Tr_{\theta_n}\widehat Y^{n+1}$ \Comment{SVD tail in $\|\cdot\|_h$.}
  \State $t_{n+1}\gets t_n+\tau_n$, $\tau_{\rm try}\gets\tau_n$, $n\gets n+1$
\EndWhile
\State \Return the accepted trajectory and a success flag.
\end{algorithmic}
\end{algorithm}

\subsubsection{Error guarantee}\label{sec:algorithm-error}
Consider any accepted prefix $0=t_0<\cdots<t_N\le T$ of Algorithm~\ref{alg:energy-sat}.
Its stages satisfy $\|r_{n+1}\|_h\le\eps(\tau_n)$ and lie in $\overline{\Omega_Q}$ by acceptance.
Lemma~\ref{lem:svd} keeps the truncated states in the same ball, and the step controller ensures $\tau_n\mu_Q\le\zeta<1$.
If the comparison states $X^n$ also lie in $\overline{\Omega_Q}$, Theorem~\ref{thm:linearstability} therefore gives
\begin{equation}\label{eq:algorithm-error}
 \max_{0\le n\le N}\|Y^n-X^n\|_h
 \le e^{\mu_Qt_N/(1-\zeta)}
 \left[e_0+\sum_{n=0}^{N-1}
 \bigl(\|d_{n+1}\|_h+\eps(\tau_n)+\theta(\tau_n)\bigr)\right],
\end{equation}
Here $e_0=\|Y^0-X^0\|_h$, and $d_{n+1}$ is defined in Section~\ref{sec:sat}.
For the parabolic problem, Lemma~\ref{lem:cubic-time-consistency} and \eqref{eq:algorithm-tolerances} give $O(e_0+\max_n\tau_n)$ error relative to the semidiscrete solution. For Vlasov--Poisson, the projected continuous solution has the defect budget \eqref{eq:vp-h-defect-sum}; Theorem~\ref{thm:vp-first-order} gives $O(e_0+\max_n\tau_n+\delta_{K,N})$ total error. Both estimates are uniform in the spatial resolution and ranks.

\subsubsection{Selecting energy-admissible solutions by damping}\label{sec:energy-damping}
We use an inexact Newton--Krylov method in matrix factors; details of the underlying low-rank iteration and arithmetic can be found in \cite[Appendix~A]{RodgersVenturi2023}.
The stages, Newton corrections and Krylov vectors are represented as $USV^\top$.
Linear actions, additions, energy evaluation and SVD recompression can be performed directly on these factors.
For an SVD $Z=\sum_{i=1}^r\sigma_i u_i v_i^\top$, with $\sigma_1\ge\cdots\ge\sigma_r>0$, define
\[
 \mathcal C_\delta(Z)=\sum_{i=1}^{r_\delta}\sigma_i u_i v_i^\top,\qquad
 r_\delta=\min\left\{q\in\{0,\ldots,r\}:\sum_{i=q+1}^r\sigma_i^2\le\delta^2\right\}.
\]
Thus $\|Z-\mathcal C_\delta(Z)\|_F\le\delta$; the empty sum gives the zero matrix.
For a factored input, the SVD is computed through thin QR factorizations and the resulting small core.

Algorithm~\ref{alg:energy-newton} incorporates the energy constraint into damping.
Starting from $Y^n$, each iteration computes a low-rank Newton correction and tries successively smaller damping factors.
A recompressed trial is accepted only if it lies in the prescribed energy ball and satisfies either the residual decrease test or the stopping tolerance.
If the inner solve or backtracking fails, the outer algorithm reduces the time step and retries from $Y^n$.

\begin{algorithm}[!htbp]
\caption{\textsc{EnergyStage}: low-rank solve with energy-constrained damping}\label{alg:energy-newton}
\begin{algorithmic}[1]
\Require $t_n$, $Y^n$, fixed $\tau$, $Q$, discrete $L^2$ residual tolerance $\eps$;
\Statex \hspace{\algorithmicindent} limits $J_{\max},K_{\max}$, $0<c_A<1$, compression tolerances $\delta_j$.
\State $Z\gets Y^n$; abbreviate $H_n=H_{n,\tau}$ from \eqref{eq:algorithm-residual}.
\For{$j=0,\ldots,J_{\max}$}
  \State $a\gets\|H_n(Z)\|_h$.
  \If{$\E_h(Z)\le Q^2$ and $a\le\eps$}
    \State \Return $(Z,\mathrm{true})$.
  \EndIf
  \If{$j=J_{\max}$}
    \State \Return $(\varnothing,\mathrm{false})$.
  \EndIf
  \State Compute a low-rank inexact Newton correction $D$ at $Z$ by Krylov iteration.
  \If{the correction solve fails}
    \State \Return $(\varnothing,\mathrm{false})$.
  \EndIf
  \State $\mathrm{accepted}\gets\mathrm{false}$.
  \For{$k=0,\ldots,K_{\max}$}
    \State $\alpha\gets2^{-k}$; $Z_{\rm tr}\gets\mathcal C_{\delta_j}(Z+\alpha D)$.
    \State $a_{\rm tr}\gets\|H_n(Z_{\rm tr})\|_h$.
    \If{$\E_h(Z_{\rm tr})\le Q^2$ and $a_{\rm tr}\le\max\{(1-c_A\alpha)a,\eps\}$}
      \State $Z\gets Z_{\rm tr}$; $\mathrm{accepted}\gets\mathrm{true}$; \textbf{break}.
    \EndIf
  \EndFor
  \If{not $\mathrm{accepted}$}
    \State \Return $(\varnothing,\mathrm{false})$.
  \EndIf
\EndFor
\end{algorithmic}
\end{algorithm}

\subsubsection{Experimental parameters}
We take $\zeta=0.9$, $\beta=1/2$ and $R=12$.
For the reaction--diffusion tests, $Q=1$ except in the growth experiment of Section~\ref{sec:energy-budget-growth}, where $Q=6$; the Vlasov--Poisson test uses $Q=20$, as specified in Section~\ref{sec:numerical-vp}.
For $\kappa=0.02$, we use the mesh-independent bounds
\[
 \|Z\|_{\ell^\infty}^2\le C_{\rm emb}^2\E_h(Z),\qquad
 C_{\rm emb}^2=\begin{cases}9,&\text{Dirichlet grids, }m\ge64,\\8,&\text{periodic grids, }m\ge32.\end{cases}
\]
Both reaction--diffusion nonlinearities satisfy $f'(s)\le3s^2$.
Diffusion is dissipative, so $\mu_Q=3C_{\rm emb}^2Q^2$ is a valid one-sided constant: $27Q^2$ for the Dirichlet tests and $24Q^2$ for the periodic tests.
The initial energy norms are below $0.559$ and $0.807$, respectively.
Every accepted step records its stability product, energy, residuals and SVD tail.

\subsection{Mesh robustness and first-order convergence}\label{sec:numerical-convergence}
We test the predicted first-order time convergence for reaction--diffusion and Vlasov--Poisson equations. The reaction--diffusion test also refines the spatial grid at fixed time steps to check that the time-error constant remains bounded under refinement.

\subsubsection{Reaction--diffusion equation}\label{sec:numerical-parabolic}
We use the cubic reaction--diffusion problem \eqref{eq:cubicmodel}: $u_t=\kappa\Delta u+u^3$ on $(0,1)^2$, with zero Dirichlet data, $u_0(x,y)=16x(1-x)y(1-y)$, and $\kappa=1/50$.
Centered differences with $h=(m+1)^{-1}$ give
\[
 X_h'=-B_{x,h}X_h-X_hB_{y,h}+X_h^{\circ3},\qquad
 B_{x,h}=B_{y,h}=\frac{\kappa}{h^2}\operatorname{tridiag}(-1,2,-1).
\]
We use $\|Z\|_h=h\|Z\|_F$ and the graph energy \eqref{eq:graphenergy}, verified in Section~\ref{sec:parabolic}.
The initial sampled matrix has rank one. We use second-order centered differences with $m=64,128,256,512$ interior points per direction and integrate to $T=0.5$. In the graph energy \eqref{eq:graphenergy}, $B_{x,h}=B_{y,h}=-(1/50)\Delta_h^{\mathrm{Dir}}$.
With $X_{h,\mathrm{ref}}$ denoting an independently computed reference trajectory on the same spatial grid, we measure
\begin{equation}\label{eq:measerror}
 E_N=\max_{0\le n\le N}\|Y^n-X_{h,\mathrm{ref}}(t_n)\|_h,
 \qquad p_N=\log_2(E_{N/2}/E_N).
\end{equation}
The step tolerances are
\begin{equation}\label{eq:standardtols}
 \eps_n=10^{-5}\tau^2,\qquad
 \theta_n=10^{-5}\tau^2.
\end{equation}
The reported runs additionally imposed $\|r_{n+1}\|_{\E,h}\le10^{-4}\tau$.

Table~\ref{tab:parabolic} and Figure~\ref{fig:parabolic} show the maximum-in-time errors. The last observed order is about $1.0145$ on all four grids. At $N=256$, the relative spread between the largest and smallest errors across these grids is about $0.188\%$, although the largest diffusion eigenvalue increases by a factor of about 62.3. The retained rank is at most seven in this family. At a fixed time step, the measured errors remain nearly unchanged under mesh refinement.
For the two-dimensional discrete diffusion operator, $\|L_h\|=8(1/50)(m+1)^2\cos^2(\pi/(2(m+1)))$. At the common time step $\tau=0.5/256$, the product $\tau\|L_h\|$ increases from 1.32 to 82.24 across the four grids. The time step is unchanged, while the errors remain almost identical. This directly illustrates the distinction between a step restriction based on the stiff operator norm and one based on the physical energy region.

\begin{table}[t]
\centering\small
\caption{Reaction--diffusion errors $E_N$ at $T=0.5$. Last row: order between the two finest time steps.}\label{tab:parabolic}
\begin{tabular}{rcccc}
\toprule
$N$ & $m=64$ & $m=128$ & $m=256$ & $m=512$ \\
\midrule
16 & $1.768\times10^{-2}$ & $1.765\times10^{-2}$ & $1.765\times10^{-2}$ & $1.764\times10^{-2}$ \\
32 & $8.026\times10^{-3}$ & $8.014\times10^{-3}$ & $8.011\times10^{-3}$ & $8.010\times10^{-3}$ \\
64 & $3.843\times10^{-3}$ & $3.837\times10^{-3}$ & $3.836\times10^{-3}$ & $3.835\times10^{-3}$ \\
128 & $1.882\times10^{-3}$ & $1.879\times10^{-3}$ & $1.879\times10^{-3}$ & $1.879\times10^{-3}$ \\
256 & $9.317\times10^{-4}$ & $9.303\times10^{-4}$ & $9.300\times10^{-4}$ & $9.299\times10^{-4}$ \\
Last order & 1.0145 & 1.0145 & 1.0145 & 1.0145 \\
\bottomrule
\end{tabular}

\end{table}
\begin{figure}[t]
\centering\includegraphics[width=\linewidth]{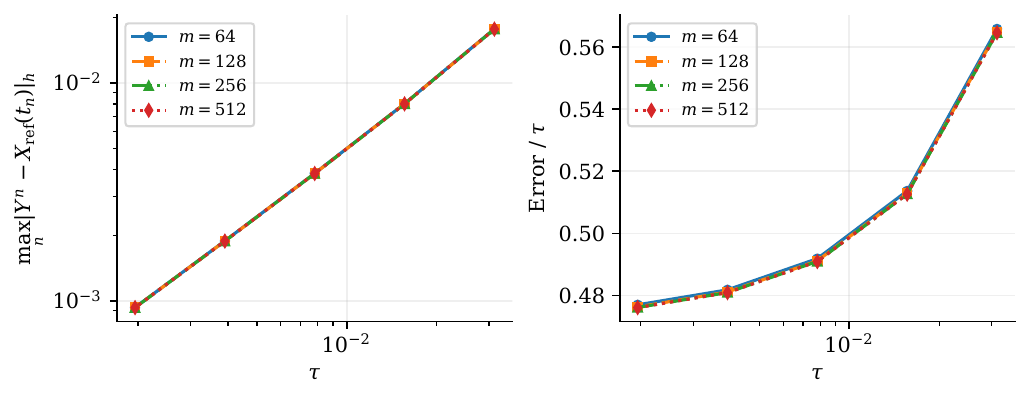}
\caption{Reaction--diffusion time convergence. Left: errors on four spatial grids. Right: error divided by the time step. The almost coincident curves show that the observed time-error constant does not grow with the norm of the diffusion operator.}\label{fig:parabolic}
\end{figure}

\subsubsection{Vlasov--Poisson system}\label{sec:numerical-vp}
We use the weighted Fourier--Hermite discretization of Section~\ref{sec:vp-discretization}
for the original VP equation on $\T\times\R$, with repulsive coupling $\chi=-1$ and
\begin{equation}\label{eq:vp-numerical-initial}
 f_0(x,v)=\frac{e^{-v^2/2}}{\sqrt{2\pi}}(1+0.05\cos x).
\end{equation}
The initial coefficient matrix represents $\PP_{K,N}(wf_0)$.
We fix $(K,N)=(16,64)$, corresponding to real coefficient matrices of size
$33\times64$, and integrate to $T=1$ with 64, 128, 256 and 512 time steps.
The energy is $\E_2$ from \eqref{eq:vp-hermite-energy}, with $Q=20$.
The one-sided constant \eqref{eq:vp-one-sided} is
$\mu_Q=2\pi Q/\sqrt6\approx51.302$; hence all tested steps satisfy
$\tau\mu_Q\le0.802<0.9$.
The residual and final-tail tolerances in the coefficient Frobenius norm are
$\varepsilon_n=\theta_n=10^{-5}\tau_n^2$.
States, Newton corrections and Krylov vectors use real matrix factors.
Fourier products are evaluated without aliasing, and the original residual
is checked independently of factor recompression.

Here $Y^n$ is the Fourier--Hermite coefficient matrix of the approximation to
$g=wf$ at time $t_n$, with $w(v)=\sqrt{1+v^2}$ as in \eqref{eq:vp-h-transform}.
Using real orthonormal Fourier modes $\psi_j$ and the normalized Hermite functions
$\phi_m$ from Section~\ref{sec:vp-discretization}, this correspondence is
\[
 g_{\mathrm{SAT}}^n(x,v)=\sum_{j=0}^{2K}\sum_{m=0}^{N-1}
 Y^n_{jm}\psi_j(x)\phi_m(v),\qquad
 f_{\mathrm{SAT}}^n=g_{\mathrm{SAT}}^n/w.
\]
Let $U_{K,N,\mathrm{ref}}(t)$ denote the reference coefficient matrix for the same
Galerkin ODE, and $f_{K,N,\mathrm{ref}}(t)$ its corresponding distribution.
The errors reported in Table~\ref{tab:vp-hermite-convergence} are
\[
 E_{\rm time}=\max_n\|Y^n-U_{K,N,\mathrm{ref}}(t_n)\|_F.
\]
By orthonormality, these equal the maximum of
$\|w(f_{\mathrm{SAT}}^n-f_{K,N,\mathrm{ref}}(t_n))\|_{L^2}$ over the time nodes.
The last observed order is approximately $0.997$.
Figure~\ref{fig:vp-hermite-convergence} shows first-order decay of the error
as the time step decreases and convergence of the ratio $E_{\rm time}/\tau$.
The largest retained rank is nine, and the computed stage energy norms
remain below $14.74$, within the prescribed ball.

The reported errors remain unchanged at the displayed precision when evaluated
against a reference on a finer spatial discretization, showing that spatial
discretization does not affect the observed convergence.

\begin{table}[t]
\centering\small
\caption{Maximum weighted time errors for the original VP equation
\eqref{eq:vp-numerical-initial}, with $(K,N)=(16,64)$, $T=1$ and $\tau=1/(\text{steps})$.
The reference is independently integrated on the same Galerkin space.}
\label{tab:vp-hermite-convergence}
\begin{tabular}{rccc}
\toprule
Steps & $\tau$ & Maximum weighted error & Order \\
\midrule
64 & $1/64$ & $1.364\times10^{-3}$ & -- \\
128 & $1/128$ & $6.879\times10^{-4}$ & $0.9878$ \\
256 & $1/256$ & $3.454\times10^{-4}$ & $0.9940$ \\
512 & $1/512$ & $1.731\times10^{-4}$ & $0.9970$ \\
\bottomrule
\end{tabular}

\end{table}
\begin{figure}[t]
\centering\includegraphics[width=\linewidth]{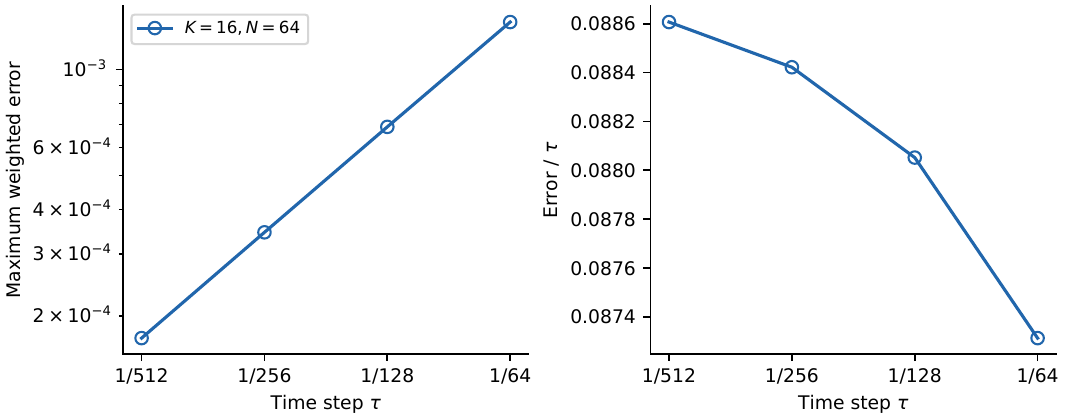}
\caption{Fourier--Hermite VP time convergence with $(K,N)=(16,64)$. Left: maximum weighted time errors.
Right: error divided by the time step.}\label{fig:vp-hermite-convergence}
\end{figure}

\subsection{Existing methods without energy constraints may compute incorrect solutions}\label{sec:numerical-root-selection}
We consider the smooth reaction--diffusion equation
\begin{equation}\label{eq:root-selection-pde}
\begin{aligned}
 u_t&=0.02\Delta u-1000\tanh(10u)+u^3,\qquad (x,y)\in[0,1)^2,\\
 u_0(x,y)&=0.8+0.1\cos(2\pi x)\cos(2\pi y)
                   +0.04\sin(4\pi x)\sin(2\pi y),
\end{aligned}
\end{equation}
with periodic boundary conditions and centered differences on a $64^2$ grid.
The initial matrix has rank three and energy norm below $0.807$.
Since $0.66\le u_0\le0.94$ and the reaction is negative on $(0,1]$, the exact solution remains nonnegative and decays toward zero.

We compare Algorithm~\ref{alg:energy-sat} with a matrix-factor implementation of the unconstrained implicit SAT strategy of \cite{RodgersVenturi2023}.
Both use the same Newton--Krylov solver with residual backtracking, initialized at the preceding SAT state.
The energy-constrained algorithm additionally checks the energy of each damped trial.
Both methods take four steps of length $\tau=0.025$ to $T=0.1$, with residual tolerance $10^{-11}$ and SVD-tail tolerance $10^{-3}\tau^2$.
For $Q=1$, the bound $\mu_Q=24$ gives $\tau\mu_Q=0.6<0.9$; no step reduction is needed.
Set $M=0.94$ and $\alpha=1000\tanh(10M)/M-M^2$. Since $\tanh(10s)/s$ decreases for $s>0$, the reaction satisfies $f(s)\le-\alpha s$ on $[0,M]$. The initial values lie in this interval, and periodic centered differences satisfy the discrete comparison principle. Comparison with the spatially constant supersolution $Me^{-\alpha t}$, together with the unit area of the domain, gives
\[
 0\le X_h(t)\le Me^{-\alpha t}\quad\hbox{entrywise},\qquad
 \|X_h(t)\|_h\le Me^{-\alpha t}.
\]
Here $\alpha\approx1062.946$. At the first positive output time $t=0.025$, this bound is $2.7061\times10^{-12}<3\times10^{-12}$ and decreases thereafter, so zero provides a reference to this accuracy at the positive output times.

On the $64^2$ grid, the unconstrained solve accepts a first stage of energy norm $11.4184$ despite a residual below $10^{-11}$.
Energy-constrained damping instead selects a stage of energy norm $0.003214$.
Figure~\ref{fig:root-selection} compares the reference solution and the two SAT approximations through the time trace at $(x,y)=(0,0)$ and the spatial slice at $y=0$ after the first step.
The unconstrained trajectory becomes negative and follows an exterior branch, whereas Algorithm~\ref{alg:energy-sat} recovers the decay toward zero.
The maximum errors at the common time nodes are $11.4184$ and $3.1956\times10^{-3}$, respectively.

\begin{figure}[t]
\centering\includegraphics[width=\linewidth]{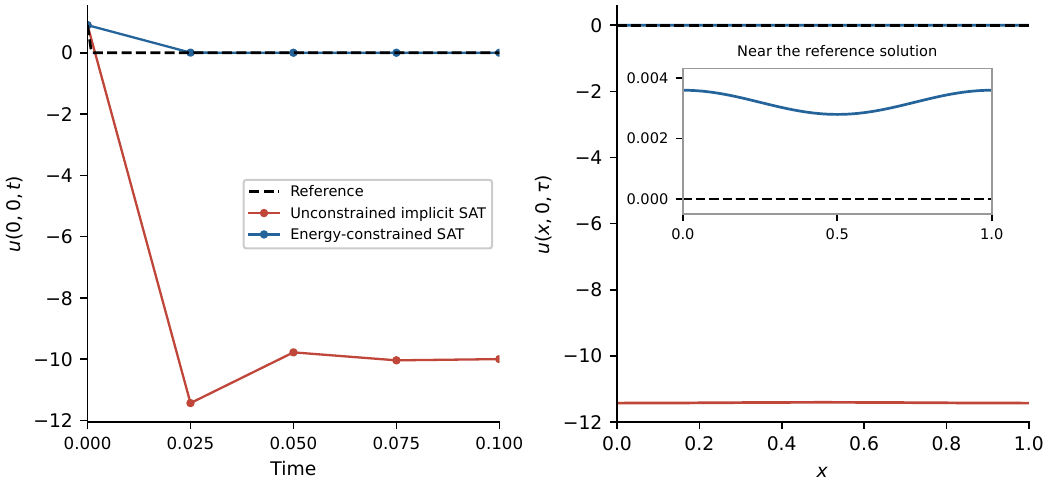}
\caption{Reference solution and the two implicit SAT approximations for \eqref{eq:root-selection-pde} on the $64^2$ grid, with $\tau=0.025$. Left: $u(0,0,t)$, with markers at the SAT time nodes. Right: $u(x,0,\tau)$; the inset resolves the energy-constrained approximation and the reference near zero. Both methods attain the residual tolerance, but the unconstrained method selects an incorrect solution.}\label{fig:root-selection}
\end{figure}

\subsection{Practical meaning of the step restriction: smaller steps for larger energy regions}\label{sec:energy-budget-growth}
To examine continuation into a larger energy region, consider
\[
 u_t=0.02\Delta u+u^3,\qquad u_0(x,y)=0.8,
\]
on the same periodic domain, using a $32^2$ grid.
The solution is spatially constant,
\[
 u(t,x,y)=a(t)=\frac{0.8}{\sqrt{1-1.28t}},\qquad 0\le t\le T=0.75.
\]
Its energy norm equals $a(t)$ and reaches $4$ at $T$, so the initial budget $Q=1$ must be enlarged; we choose $Q=6$.

Both runs require the implicit stage to satisfy $\E_h(\widehat Y^{n+1})^{1/2}\le6$ and use the same residual and truncation tolerances as above, but choose different time steps.
The first keeps $\tau=0.025$, the step selected for $Q=1$.
The second updates the stability restriction to $\mu_6=24\cdot6^2=864$ and uses Algorithm~\ref{alg:energy-sat}, which accepts $\tau=0.00078125$ with $\tau\mu_6=0.675$.

The constant spatial mode is invariant under both the semidiscrete equation and the factor iteration, and all accepted states have rank one.
Writing $Y^n=a_n\boldsymbol1\boldsymbol1^\top$, the stage equation reduces to
\[
 a-\tau a^3=a_n,\qquad |a|\le6.
\]
The frozen-step run reaches $a_n=3.22020$ at $t_n=0.675$ and fails at the next step.
Here the absence of an admissible constant stage can be checked directly:
\[
 \max_{|a|\le6}(a-0.025a^3)
 =\frac{2}{3\sqrt{3\cdot0.025}}\approx2.43432<3.22020.
\]
Its only real constant root is approximately $-7.55335$, outside the ball.
The updated-step run reaches $T=0.75$ with energy norm $4.12954<6$.
Table~\ref{tab:energy-budget-growth} and Figure~\ref{fig:energy-budget-growth} show that enlarging the energy budget while retaining its former step can obstruct continuation; updating the step restriction allows advancement to the prescribed final time.

\begin{table}[t]
\centering\small
\caption{Continuation with acceptance radius $Q=6$. The frozen-step control retains the step selected for $Q=1$; the updated run uses the stability bound for $Q=6$.}\label{tab:energy-budget-growth}
\begin{tabular}{lcccc}
\toprule
Step selection & $\tau$ & $\tau\mu_6$ & Last accepted time & Energy norm \\
\midrule
Frozen $Q=1$ step & $0.025$ & $21.6$ & $0.675$ & $3.22020$ \\
Updated $Q=6$ step & $0.00078125$ & $0.675$ & $0.750$ & $4.12954$ \\
\bottomrule
\end{tabular}

\end{table}
\begin{figure}[t]
\centering\includegraphics[width=0.82\linewidth]{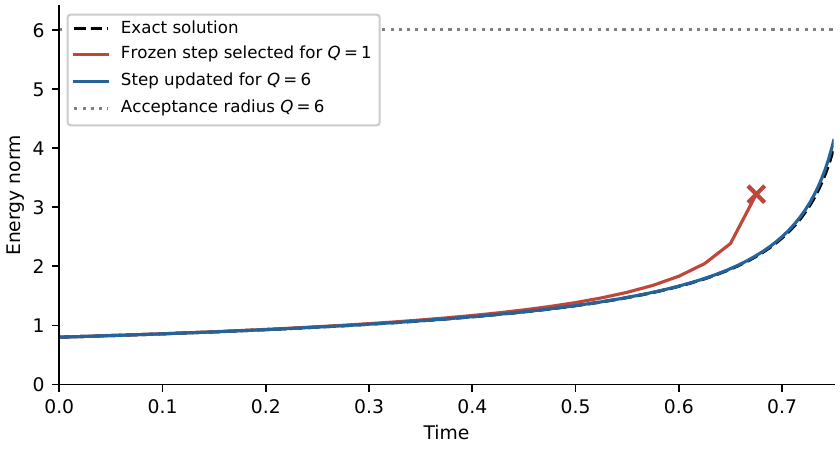}
\caption{Growth of the spatially constant solution. The cross marks the last accepted frozen-step state. Updating the step for $Q=6$ permits continuation to $T=0.75$.}\label{fig:energy-budget-growth}
\end{figure}

\FloatBarrier
\section{Conclusions}\label{sec:conclusion}
We have developed a unified framework for mesh-robust error analysis of implicit SAT methods. Existing low-rank error bounds may involve full Lipschitz constants that grow under spatial refinement, limiting their usefulness, particularly for kinetic equations. Our framework provides a discrete counterpart of energy localization for nonlinear PDEs and addresses this open problem for the class of implicit SAT schemes covered by our assumptions.

The analysis combines three mesh-uniform conditions: an initial energy bound, controlled energy propagation, and one-sided Lipschitz stability within the energy-localized region. Ordinary SVD truncation preserves the required bounds for energies formed from separate left and right operators. Under the associated step-size and residual conditions, the implicit stage exists and is unique in the prescribed energy ball for each admissible residual. The resulting error estimates and time-step restrictions are independent of spatial resolution and require no lower bound on the numerical singular values.

For semilinear parabolic and Vlasov--Poisson equations, we establish robust first-order error bounds for backward-Euler SAT under suitable regularity assumptions. The parabolic analysis applies to the full implicit equation without separating the stiff part. For the original Vlasov--Poisson system on $\T\times\R$, the total error is of order $O(\tau+\delta_{K,N})$, where $\delta_{K,N}$ measures spatial approximation, with constants uniform in both spectral cutoffs.

We also design an energy-constrained implicit SAT algorithm that combines energy acceptance with a one-sided stability step restriction. Numerical experiments support the predicted first-order convergence and demonstrate the practical role of energy localization. Without energy constraints, implicit SAT can select incorrect solution branches even when algebraic residuals are small. Moreover, continuation into larger energy regions can require smaller time steps. The energy constraint and the corresponding step restriction therefore guide both implicit solution selection and continuation.

\section*{Acknowledgements}
This work was supported by the National Natural Science Foundation of China under Grant Nos. 12494543 and 92370205, and by the 
Strategic Priority Research Program of the Chinese Academy of Sciences under Grant No. XDA0480504. 
The computations were partially performed on the LSSC4 PC cluster of the State Key Laboratory of Scientific and Engineering Computing.

\appendix
\section{Time consistency for semilinear reaction--diffusion equations}\label{app:cubic-time-consistency}
We prove the integral bound in Lemma~\ref{lem:cubic-time-consistency}.
Throughout, the hypotheses of Section~\ref{sec:parabolic} hold and all constants are independent of $h$.
Fractional powers of the self-adjoint positive definite matrix operator $L_h$ are defined spectrally on its orthonormal eigenmatrices:
if $L_h\psi_\nu=\Lambda_\nu\psi_\nu$, then
$L_h^\gamma Z=\sum_\nu\Lambda_\nu^\gamma\ip{Z}{\psi_\nu}_h\psi_\nu$ for $\gamma>0$.

\emph{Initial regularity.}
Set $\beta=\sigma/2\in(0,1/4)$.
The eigenvalues are $\Lambda_{k,h}=\lambda_h(k_1)+\lambda_h(k_2)$ in the Fourier representation used in Section~\ref{sec:parabolic-principles}.
Their upper bound and the sampling estimate \eqref{eq:initial-sampling}, with $s=2+\sigma$, give
\[
 \|L_h^{1+\beta}X_{h,0}\|_h^2
 \le C_{\kappa,\sigma}
 \sum_{k\in K_n}(1+|k|^2)^{2+\sigma}|\widehat x_{h,0}(k)|^2
 \le C_{\kappa,\sigma}\mathcal A_{2+\sigma}(u_0)^2.
\]
For $0<\beta\le1$, $\lambda^{2\beta}\le1+\lambda^2$ implies
\[
 \|L_h^\beta Z\|_h^2
 \le\|Z\|_h^2+\|L_hZ\|_h^2\le2\E_h(Z).
\]
Consequently, the composition bound \eqref{eq:cubicsobolev} yields
\begin{equation}\label{eq:cubicinitialregularity}
 \|L_h^\beta F_h(X_{h,0})\|_h
 \le \|L_h^{1+\beta}X_{h,0}\|_h
      +\sqrt2 K_Q\|X_{h,0}\|_{\E,h}
 \le C_{Q,\sigma,u_0}.
\end{equation}

\emph{An integrable second time derivative.}
On the common energy interval, set
$W_h=X_h'$ and $N_h=(f')^\circ(X_h)\circ W_h$.
Differentiating the finite-dimensional equation gives
\[
 W_h'=-L_hW_h+N_h,\qquad W_h(0)=F_h(X_{h,0}).
\]
The energy bound directly controls the basic norms:
\[
 \begin{aligned}
 \|W_h(t)\|_h
 &\le\|L_hX_h(t)\|_h+\|f^\circ(X_h(t))\|_h
 \le(\sqrt2+M_{1,Q})Q,\\
 \|N_h(t)\|_h&\le M_{1,Q}(\sqrt2+M_{1,Q})Q.
 \end{aligned}
\]
Write
\[
 W_h(t)=e^{-tL_h}W_h(0)+V_h(t),\qquad
 V_h'+L_hV_h=N_h,\qquad V_h(0)=0.
\]
By the spectral theorem,
$\|L_h^{1-\beta}e^{-tL_h}\|\le C_\beta t^{\beta-1}$.
Thus \eqref{eq:cubicinitialregularity} gives
\[
 \int_0^T\left\|\frac{\dd}{\dd t}e^{-tL_h}W_h(0)\right\|_h\dd t
 \le C_\beta\|L_h^\beta W_h(0)\|_h
       \int_0^T t^{\beta-1}\dd t
 \le C_{Q,T,\sigma,u_0}.
\]
For the forced part, square $V_h'+L_hV_h=N_h$ in the physical norm and integrate:
\[
 \int_0^T\|V_h'\|_h^2\dd t
 +\int_0^T\|L_hV_h\|_h^2\dd t
 +\|L_h^{1/2}V_h(T)\|_h^2
 =\int_0^T\|N_h\|_h^2\dd t.
\]
The cross term is the time derivative of $\|L_h^{1/2}V_h\|_h^2$, and $V_h(0)=0$ removes the initial contribution.
Cauchy--Schwarz therefore gives
\[
 \int_0^T\|V_h'(t)\|_h\dd t
 \le \sqrt T\left(\int_0^T\|N_h(t)\|_h^2\dd t\right)^{1/2}
 \le T\sup_{0\le t\le T}\|N_h(t)\|_h.
\]
Since $X_h''=W_h'$, these two bounds prove the first estimate in \eqref{eq:cubicdefect}.

\bibliographystyle{siamplain}
\bibliography{references}

@article{MeyerMantheCederbaum1990,
 author={Meyer, H.-D. and Manthe, U. and Cederbaum, L. S.},
 title={The multi-configurational time-dependent {Hartree} approach},
 journal={Chem. Phys. Lett.}, volume={165}, number={1}, pages={73--78}, year={1990},
 doi={10.1016/0009-2614(90)87014-I}}

@article{Kormann2015,
 author={Kormann, Katharina},
 title={A semi-{Lagrangian} {Vlasov} solver in tensor train format},
 journal={SIAM J. Sci. Comput.}, volume={37}, number={4}, pages={B613--B632}, year={2015},
 doi={10.1137/140971270}}

@article{PengMcClarrenFrank2020,
 author={Peng, Zhuogang and McClarren, Ryan G. and Frank, Martin},
 title={A low-rank method for two-dimensional time-dependent radiation transport calculations},
 journal={J. Comput. Phys.}, volume={421}, pages={109735}, year={2020},
 doi={10.1016/j.jcp.2020.109735}}

@article{Bachmayr2023,
 author={Bachmayr, Markus},
 title={Low-rank tensor methods for partial differential equations},
 journal={Acta Numer.}, volume={32}, pages={1--121}, year={2023},
 doi={10.1017/S0962492922000125}}

@article{KochLubich2007,
 author={Koch, Othmar and Lubich, Christian},
 title={Dynamical low-rank approximation},
 journal={SIAM J. Matrix Anal. Appl.}, volume={29}, number={2}, pages={434--454}, year={2007},
 doi={10.1137/050639703}}

@article{CerutiLubich2022,
 author={Ceruti, Gianluca and Lubich, Christian},
 title={An unconventional robust integrator for dynamical low-rank approximation},
 journal={BIT Numer. Math.}, volume={62}, pages={23--44}, year={2022},
 doi={10.1007/s10543-021-00873-0}}

@article{CerutiKuschLubich2022,
 author={Ceruti, Gianluca and Kusch, Jonas and Lubich, Christian},
 title={A rank-adaptive robust integrator for dynamical low-rank approximation},
 journal={BIT Numer. Math.}, volume={62}, pages={1149--1174}, year={2022},
 doi={10.1007/s10543-021-00907-7}}

@article{GuoQiu2022,
 author={Guo, Wei and Qiu, Jing-Mei},
 title={A low rank tensor representation of linear transport and nonlinear {Vlasov} solutions and their associated flow maps},
 journal={J. Comput. Phys.}, volume={458}, pages={111089}, year={2022},
 doi={10.1016/j.jcp.2022.111089}}

@article{GuoQiu2024,
 author={Guo, Wei and Qiu, Jing-Mei},
 title={A conservative low rank tensor method for the {Vlasov} dynamics},
 journal={SIAM J. Sci. Comput.}, volume={46}, number={1}, pages={A232--A263}, year={2024},
 doi={10.1137/22M1473960}}

@article{Review2025,
 author={Einkemmer, Lukas and Kormann, Katharina and Kusch, Jonas and McClarren, Ryan G. and Qiu, Jing-Mei},
 title={A review of low-rank methods for time-dependent kinetic simulations},
 journal={J. Comput. Phys.}, volume={538}, pages={114191}, year={2025},
 doi={10.1016/j.jcp.2025.114191}}

@article{LubichOseledets2014,
 author={Lubich, Christian and Oseledets, Ivan V.},
 title={A projector-splitting integrator for dynamical low-rank approximation},
 journal={BIT Numer. Math.}, volume={54}, pages={171--188}, year={2014},
 doi={10.1007/s10543-013-0454-0}}

@article{KieriLubichWalach2016,
 author={Kieri, Emil and Lubich, Christian and Walach, Hanna},
 title={Discretized dynamical low-rank approximation in the presence of small singular values},
 journal={SIAM J. Numer. Anal.}, volume={54}, number={2}, pages={1020--1038}, year={2016},
 doi={10.1137/15M1026791}}

@article{OstermannPiazzolaWalach2019,
 author={Ostermann, Alexander and Piazzola, Chiara and Walach, Hanna},
 title={Convergence of a low-rank {Lie--Trotter} splitting for stiff matrix differential equations},
 journal={SIAM J. Numer. Anal.}, volume={57}, number={4}, pages={1947--1966}, year={2019},
 doi={10.1137/18M1177901}}

@article{RodgersDektorVenturi2022,
 author={Rodgers, Abram and Dektor, Alec and Venturi, Daniele},
 title={Adaptive integration of nonlinear evolution equations on tensor manifolds},
 journal={J. Sci. Comput.}, volume={92}, pages={39}, year={2022},
 doi={10.1007/s10915-022-01868-x}}

@article{RodgersVenturi2023,
 author={Rodgers, Abram and Venturi, Daniele},
 title={Implicit integration of nonlinear evolution equations on tensor manifolds},
 journal={J. Sci. Comput.}, volume={97}, pages={33}, year={2023},
 doi={10.1007/s10915-023-02352-w}}

@article{ElKahzaEtAl2024,
 author={El Kahza, Hamad and Taitano, William and Qiu, Jing-Mei and Chac{\'o}n, Luis},
 title={Krylov-based adaptive-rank implicit time integrators for stiff problems with application to nonlinear {Fokker--Planck} kinetic models},
 journal={J. Comput. Phys.}, volume={518}, pages={113332}, year={2024},
 doi={10.1016/j.jcp.2024.113332}}

@article{AppeloCheng2025,
 author={Appel{\"o}, Daniel and Cheng, Yingda},
 title={Robust implicit adaptive low rank time-stepping methods for matrix differential equations},
 journal={J. Sci. Comput.}, volume={102}, pages={81}, year={2025},
 doi={10.1007/s10915-025-02808-1}}

@article{NakaoQiuEinkemmer2025,
 author={Nakao, Joseph and Qiu, Jing-Mei and Einkemmer, Lukas},
 title={Reduced augmentation implicit low-rank ({RAIL}) integrators for advection-diffusion and {Fokker--Planck} models},
 journal={SIAM J. Sci. Comput.}, volume={47}, number={2}, pages={A1145--A1169}, year={2025},
 doi={10.1137/23M1622921}}

@incollection{Kato1975,
 author={Kato, Tosio},
 title={Quasi-linear equations of evolution, with applications to partial differential equations},
 booktitle={Spectral Theory and Differential Equations},
 editor={Everitt, William N.}, series={Lecture Notes in Mathematics}, volume={448},
 publisher={Springer}, address={Berlin}, pages={25--70}, year={1975},
 doi={10.1007/BFb0067080}}

@book{Taylor2011,
 author={Taylor, Michael E.},
 title={Partial Differential Equations {III}: Nonlinear Equations},
 series={Applied Mathematical Sciences}, volume={117}, edition={second},
 publisher={Springer}, address={New York}, year={2011},
 doi={10.1007/978-1-4419-7049-7}}

@article{EinkemmerLubich2018,
 author={Einkemmer, Lukas and Lubich, Christian},
 title={A low-rank projector-splitting integrator for the {Vlasov--Poisson} equation},
 journal={SIAM J. Sci. Comput.}, volume={40}, number={5}, pages={B1330--B1360}, year={2018},
 doi={10.1137/18M116383X}}

@book{Deimling1985,
 author={Deimling, Klaus},
 title={Nonlinear Functional Analysis},
 publisher={Springer}, address={Berlin}, year={1985},
 doi={10.1007/978-3-662-00547-7}}

@misc{BongioanniTorrea2006,
 author={Bongioanni, B. and Torrea, J. L.},
 title={Sobolev spaces associated to the harmonic oscillator},
 year={2006}, note={arXiv:math/0608684},
 url={https://arxiv.org/abs/math/0608684}}

@article{GreenPatrickSpiteri2019,
 author={Green, Kevin R. and Patrick, George W. and Spiteri, Raymond J.},
 title={On theoretical upper limits for valid timesteps of implicit {ODE} methods},
 journal={AIMS Math.}, volume={4}, number={6}, pages={1841--1853}, year={2019},
 doi={10.3934/math.2019.6.1841}}
\end{document}